\journal{Journal of Multivariate Analysis}
\newcommand{\as}{\quad \mbox{as} \quad }
\newcommand{\aand}{\quad \mbox{and} \quad }
\newcommand{\bs}{\boldsymbol}
\newcommand{\eps}{\varepsilon}
\newcommand{\Eps}{\mathcal{E}}
\newcommand{\E}{{\rm E}}
\newcommand{\ka}{\mbox{\Large $\kappa$}}
\newcommand{\K}{\mathcal{K}}
\newcommand{\lr}[1]{\langle #1 \rangle}
\newcommand{\mR}{\bs{\mathcal{R}}}
\newcommand{\mQ}{\bs{\mathcal{Q}}}
\newcommand{\mX}{\bs{\mathcal{X}}}
\newcommand{\N}{\mathds{N}}
\newcommand{\R}{\mathds{R}}
\newcommand{\simiid}{\underset{iid}{\sim}}
\newcommand{\To}{\longrightarrow}
\newcommand{\Z}{\mathds{Z}}
\def\ind{\perp\!\!\!\!\perp}
\newcommand{\corr}{{\rm Corr}}
\newcommand{\cov}{{\rm Cov}}
\newcommand{\var}{{\rm Var}}
\newcommand{\trace}{{\rm tr}}
\newcommand{\sign}{{\rm sign}}
\newcommand{\vc}{{\rm vec}}
\theoremstyle{definition}
\newtheorem{remark}{Remark}[section]
\theoremstyle{plain}
\newtheorem{prop}{Proposition}[section]
\newtheorem{thm}{Theorem}[section]
\newtheorem{lemma}{Lemma}[section]
\newtheorem{coro}{Corollary}[section]
\begin{document}

\begin{frontmatter}

\title{On the behavior of the DFA and DCCA in trend-stationary processes}

\author[A1]{Taiane Schaedler Prass\corref{mycorrespondingauthor}}
\author[A1]{Guilherme Pumi}

\address[A1]{Instituto de Matem\'atica e Estat\'istica - Universidade Federal do Rio Grande do Sul - Av. Bento Gon\c calves, 9500, Porto Alegre - RS, Brazil. }

\cortext[mycorrespondingauthor]{Corresponding author. Email address: \url{taiane.prass@ufrgs.br}}

\begin{abstract}
In this work, we develop the asymptotic theory of the Detrended Fluctuation Analysis (DFA) and Detrended Cross-Correlation Analysis (DCCA) for trend-stationary stochastic processes without any assumption on the specific form of the underlying distribution.  All results are presented and derived under the general framework of potentially overlapping boxes for the polynomial fit.
We prove the stationarity of the DFA and DCCA, viewed as stochastic processes, obtain closed forms for moments up to second order, including the covariance structure for DFA and DCCA and a miscellany of law of large number related results. Our results generalize and improve several results presented in the literature. To verify the behavior of our theoretical results in small samples, we present a Monte Carlo simulation study and an empirical application to econometric time series.
\end{abstract}
\begin{keyword} %alphabetical order
cross-correlation \sep
DCCA \sep
trend-stationary time series.
\MSC[2010] Primary
62H20\sep %Measures of association (correlation, canonical correlation, etc.)
62M10 % Time series, auto-correlation, regression, etc. in statistics
\sep
Secondary 62H12\sep %Multivariate distribution of statistics
62F12.%Asymptotic properties of parametric estimators
\end{keyword}

\end{frontmatter}

\section{Introduction}

Calculating basic statistics as variance, correlation, cross-correlation among others from non-stationary data is a challenging problem. In this context, commonly applied statistics such as sample autocorrelation, sample variance and sample cross-correlation lose their traditional meaning. Given the importance of such quantities, circumventing this problem becomes an essential matter.

The Detrended Fluctuation Analysis (DFA), introduced by \cite{Peng1994}, is often heuristically described as an indirect way to quantifying variation in trend-stationary time series (understood as the sum of a stationary process plus a polynomial trend).  A generalization of the DFA for the context where the interest lies in the joint behavior of a pair of time series is the Detrended Cross-Correlation Analysis (DCCA), introduced  by \cite{Z2011} based on the detrended covariance of \cite{PS2008} and the DFA. In this sense, the DCCA is an indirect quantifier of cross-correlation.

{Applications of the DFA and DCCA are abundant. A list of near 100 applications of DFA is presented in \cite{Kantelhardt2001,Hu2001} and references therein. Applications include areas such as physics, medicine, economics, bioinformatics, meteorology, among others. In applications, the DFA is employed as a tool to detect and quantify long-range dependence in trend-stationary time series. As for the DCCA, it is usually applied as a measure of long-range cross-correlation between time series. For instance, applications of the DFA and DCCA in economics include the study of the correlation and cross-correlation structure between Brazilian stock and commodity markets \citep{Siqueira2010} and between  Chinese A-share and B-share markets \citep{Wang2010}. In the modeling of traffic flow, the DFA and DCCA are applied to the study of long-range correlation and cross-correlation between different traffic fluctuations signals \citep{Xu2010}. Recently, \cite{Zebende2018} applies the DFA and DCCA to analyze the relationship between relative air humidity and temperature. Other applications of the DCCA can be found in \cite{Petal2011} and reference therein.}

In the literature, both DFA and DCCA, are usually defined in a {constructively fashion based on a sample from a given stochastic process,  that is, as estimators}. Interestingly, the literature is remarkably vague about their theoretical counterparts. Instead, the focus usually lies on the relationship between the DFA/DCCA and the underlying time series, especially in the context of long-range dependence non-stationary time series, which are the core of applications of these methodologies. In other words, what does the DCCA and DFA measure is still unknown. In this paper, we make an effort to solve this problem by investigating and giving meaning to the DFA and DCCA theoretical counterparts. Incidentally, the precise definition of the DFA and DCCA's theoretical counterparts will open the possibility of discussion about the properties of the DFA and DCCA as estimators, such as consistency and unbiasedness, absent in the current literature.

Large sample results for the DFA and DCCA are known under restrictions on the underlying process. For the DFA and DCCA, \cite{BK2008} presents large sample results in the context of fractional Gaussian noise and fractional Brownian Motion. For the DCCA, asymptotic theory is available only for long-range dependent trend-stationary time series {decomposable} as a sum of a polynomial trend plus a fractional Gaussian noise \citep{Blythe2013,Blythe2016}. To the best of our knowledge, large sample results under the general scope of stationary processes are not available. In time series, long-range dependence is often regarded as a {complicated} and delicate subject, especially when compared to classical ARMA processes. So a fair question is: why the asymptotic theory for the DCCA and DFA {is} established under non-stationary fractional Gaussian noise assumptions? We can think of three good reasons for that. First, the DFA and DCCA were designed with long-range dependent data in mind. So it is only natural that the theory has been developed {in} this direction. Second, mathematically speaking, the context of fractional Gaussian noise (or fractional Brownian motion) is very convenient not only because it presents a well-developed theory, but also because when working with DFA/DCCA, it entails several simplifications that hold specifically for these processes, but not for general stationary time series. Third, adding a polynomial trend to the base stationary process allows working in the more general context of trend-stationary time series. {Such a trend, however, does not affect either the DFA or the DCCA, in such a way that results valid for stationary processes will also be valid for} trend-stationary time series, without any modification.

In this work, we are interested in developing the theory of DCCA for jointly trend-stationary processes. As we shall show, it is sufficient to work {in the context of jointly stationary processes because deterministic polynomial trends play no role} in the asymptotic theory. Some of the established literature consider non-overlapping boxes to calculate the DFA and DCCA. This approach {allows for} some simplifications, but there is no theoretical reason {for doing so} and, in practice, applying overlapping boxes can be advantageous, especially {in} small sample sizes. Hence, we shall consider the more general framework of potentially overlapping boxes. In this work, we shall derive several results regarding DFA and DCCA under stationarity conditions and the existence of the appropriate moments. Among these results, {we highlight the stationarity of the DFA and DCCA, derivation of closed forms for the moments up to second-order (including the covariance structure for DFA and DCCA) as well as} a miscellany of law of large number related results. {We also present a Monte Carlo simulation study and an empirical application}.

The paper is organized as follows. In the next section, we define the DFA and DCCA and introduce some notation. Section~\ref{sec:3} is concerned with stationarity results for the DCCA and DFA and the study of the DFA and DCCA's theoretical counterparts. The covariance structure of the DCCA and DFA (as stochastic processes) and a miscellany of law of large number results are derived. In Section~\ref{special} we discuss some special cases of general interest. The proofs of all results are presented in the Appendix. This paper accompanies a supplementary material, which contains further examples, a Monte Carlo simulation study and also an empirical application of the DCCA to the study of the joint behavior of 4 stock indexes (S\&P500, Nasdaq, Dow Jones and Ibovespa) and the Bitcoin cryptocurrency.

%we describe the theoretical behavior of the DCCA under several scenarios of interest. A Monte Carlo simulation study is also presented to showcase the results obtained for each particular case}. In Section 5, we present an empirical application of the DCCA to {the study of the joint behavior of 4 stock indexes (S\&P500, Nasdaq, Dow Jones and Ibovespa) and the Bitcoin cryptocurrency. Some extra results and the proofs of all results are presented in the Appendix.}

\section{Detrended Cross-Correlation Coefficient}

Throughout this paper, given a sequence $\{Y_{k,t}\}_{t=1}^n$, let $\bs{Y}_{k,j}^{\lr{i}}$ be defined by
\[
\bs{Y}_{k,j}^{\lr{i}} =  (Y_{k,i}, \dots, Y_{k,j})^\top  , \quad   \ i,j \in\{ 1, \dots, n\}, \  i \leq j.
 \]
{ For any $\ell \times \ell$ matrix  $A_\ell$, let  $A_\ell^{\lr{i}}$ be the matrix containing the elements of $A_\ell$, from row $i$ up to row $\ell$. Given a block matrix $A$,  let $[A]^{p,q}$ denote its $(p,q)$th block.  Also,  let $0_n$ and $1_n$  denote vectors of zeros and ones in $\R^n$,  respectively,  $0_{m,n}$ and $1_{m,n}$ denote $m\times n$ matrices of zeros and ones, respectively,  and $I_n$  denote the $n\times n$ identity matrix. }

Let $\{X_{1,t}\}_{t\in\Z}$ and  $\{X_{2,t}\}_{t\in\Z}$ be two stochastic processes {and let $\{X_{1,t}\}_{t=1}^n$ and  $\{X_{2,t}\}_{t=1}^n$  be two samples of size $n$ obtained from these processes, respectively.}
Define the integrated signals $\{R_{k,t}\}_{t=1}^n$ by
\begin{equation}\label{eq:rt}
R_{k,t} := \sum_{j=1}^t X_{k,j}, \quad k \in\{1,2\}, \  \ t\in\{1,\dots, n\}.
\end{equation}
Fig. \ref{f:xtandrt} presents the time series plot of a sample $\{X_t\}_{t=1}^{50}$ from a stationary time series and the corresponding integrated time series $\{R_t\}_{t=1}^{50}$.
\begin{figure}[!ht]
    \centering
 \includegraphics[width=\textwidth]{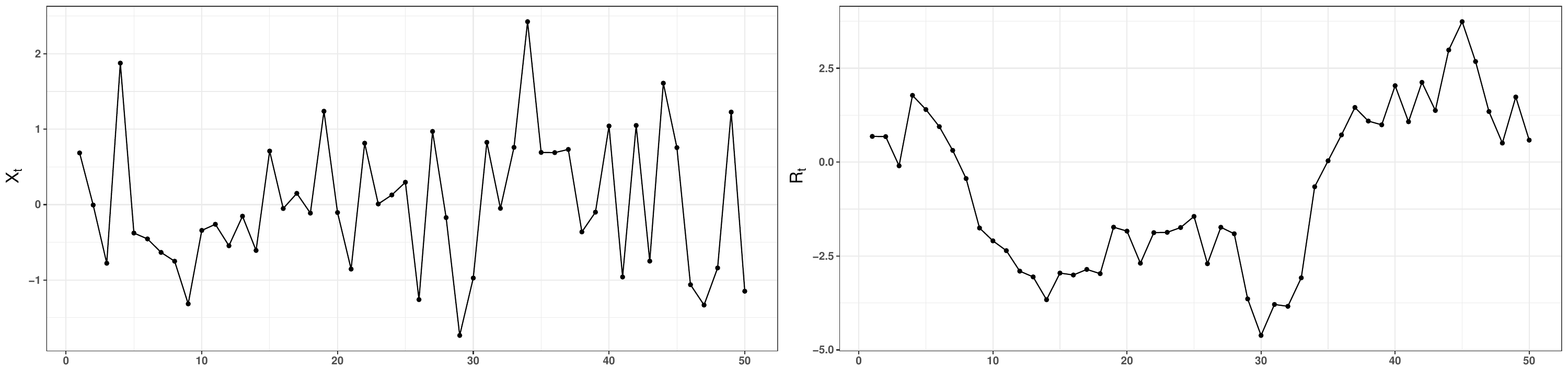}
  \caption{A simulated sample  $\{X_t\}_{t=1}^{50}$ from a stationary time series and the corresponding integrated time series $\{R_t\}_{t=1}^{50}$.}\label{f:xtandrt}
\end{figure}

Let $J_{\ell}$ be the $\ell\times \ell$ matrix whose  $(r,s)$th element is given by
\begin{equation}\label{eq:J}
  [J_\ell]_{r,s} = \left\{ \begin{array}{ll}
                      1, & \mbox{if} \quad 1 \leq s \leq r \leq \ell,\\
                      0,& \mbox{if} \quad 1 \leq r < s \leq \ell.
                    \end{array}
                    \right.
\end{equation}
It follows that, for $0 < m < n$,
\begin{equation}\label{eq:RRi}
\bs{R}_{k,n}^{\lr{1}} = J_n\bs{X}_{k,n}^{\lr{1}}, \qquad  \bs{R}_{k,m+i}^{\lr{i}}  =  J_{m+i}^{\lr{i}}\bs{X}_{k,m+i}^{\lr{1}}, \quad i \in\{1,\dots,n-m\}.
\end{equation}
The set $\big\{\bs{R}_{k,m+i}^{\lr{i}} \big\}_{i=1}^{n-m}$ is a sequence of $n-m$ overlapping boxes each containing $m+1$ {values from the integrated signal}, starting at $i$ and ending at $m+i$.

\begin{remark}
Notice that, upon replacing the right-hand-side equality in \eqref{eq:RRi} by
\[
\bs{R}_{k,(m+1)i}^{\lr{(m+1)(i-1)+1}} = J_{(m+1)i}^{\lr{(m+1)(i-1)+1}}\bs{X}_{k,(m+1)i}^{\lr{1}}, \quad i \in\big\{ 1, \ldots, \lfloor n/(m+1) \rfloor\big\},
\]
the corresponding set $\Big\{\bs{R}_{k,(m+1)i}^{\lr{(m+1)(i-1)+1}} \Big\}_{i=1}^{\lfloor n/(m+1) \rfloor}$ is a sequence of $\lfloor n/(m+1) \rfloor$ non-overlapping boxes each containing $m+1$ {values from the integrated signal}, starting at $(m+1)(i-1)+1$ and ending at $(m+1)i$. All calculations and results that follow can be obtained analogously for this case.
\end{remark}

Now, for each $k \in\{1,2\} $ and $i \in\{1,\dots,n-m\}$,  let $\tilde{\bs{R}}_{k,i}$ be the vector with the ordinates $\tilde R_{k,t}(i)$, $i \leq t \leq m+i$,  of a polynomial least-squares fit associated to the $i$th box $\bs{R}_{k,m+i}^{\lr{i}}$ and  $\bs{\Eps}_{k,i}$ be the vector with the corresponding error terms $\Eps_{k,t}(i)$, $i \leq t \leq m+i$, that is,
 \begin{align}
\tilde{\bs{R}}_{k,i} & = P_{m+1}\bs{R}_{k,m+i}^{\lr{i}} = \big(\tilde R_{k,i}(i), \dots, \tilde R_{k,m+i}(i)\big)^\top  ,\nonumber\\
\bs{\Eps}_{k,i} & = \bs{R}_{k,m+i}^{\lr{i}}   - \tilde{\bs{R}}_{k,i}  =  Q_{m+1}\bs{R}_{k,m+i}^{\lr{i}} = \big(\Eps_{k,i}(i), \dots, \Eps_{k,m+i}(i)\big)^\top  , \label{eq:epsk}
\end{align}
with
\begin{equation}
D_{m+1}^\top  := \left(
            \begin{array}{cccc}
              1 & 1 & \dots & 1 \\
              1 & 2 & \dots & m+1 \\
              \vdots&\vdots&\ddots&\vdots\\
              1^{\nu+1} & 2^{\nu+1} &\cdots &(m+1)^{\nu+1}
            \end{array}
          \right)\!, \quad \begin{array}{l} \\ \ P_{m+1} := D_{m+1}(D_{m+1}^\top  D_{m+1})^{-1}D_{m+1}^\top  ,\\[.1cm] \ Q_{m+1} := I_{m+1} - P_{m+1} \end{array}\label{eq:XPQ}
\end{equation}
and $\nu \in \N$. The dependence on $\nu$ in the matrices $D_{m+1}$, $P_{m+1}$ and $Q_{m+1}$ and the dependence on $m$ in the vectors $\tilde{\bs{R}}_{k,i}$ and $\bs{\Eps}_{k,i}$  was suppressed for simplicity. The error term $\bs{\Eps}_{k,i}$  is often called the ``detrended walk''.  Notice that, since we are considering overlapping boxes, the index $i$ in the notation $\tilde R_{k,t}(i)$ and $\Eps_{k,t}(i)$ is necessary to {indicate the boxes to which the values are associated}. {Fig. \ref{f:xtandrtbox} illustrates the local polynomial fit and, incidentally, the use of the notation, in the case of overlapping and non-overlapping boxes.}

\begin{figure}[!ht]
    \centering
 \includegraphics[width=\textwidth]{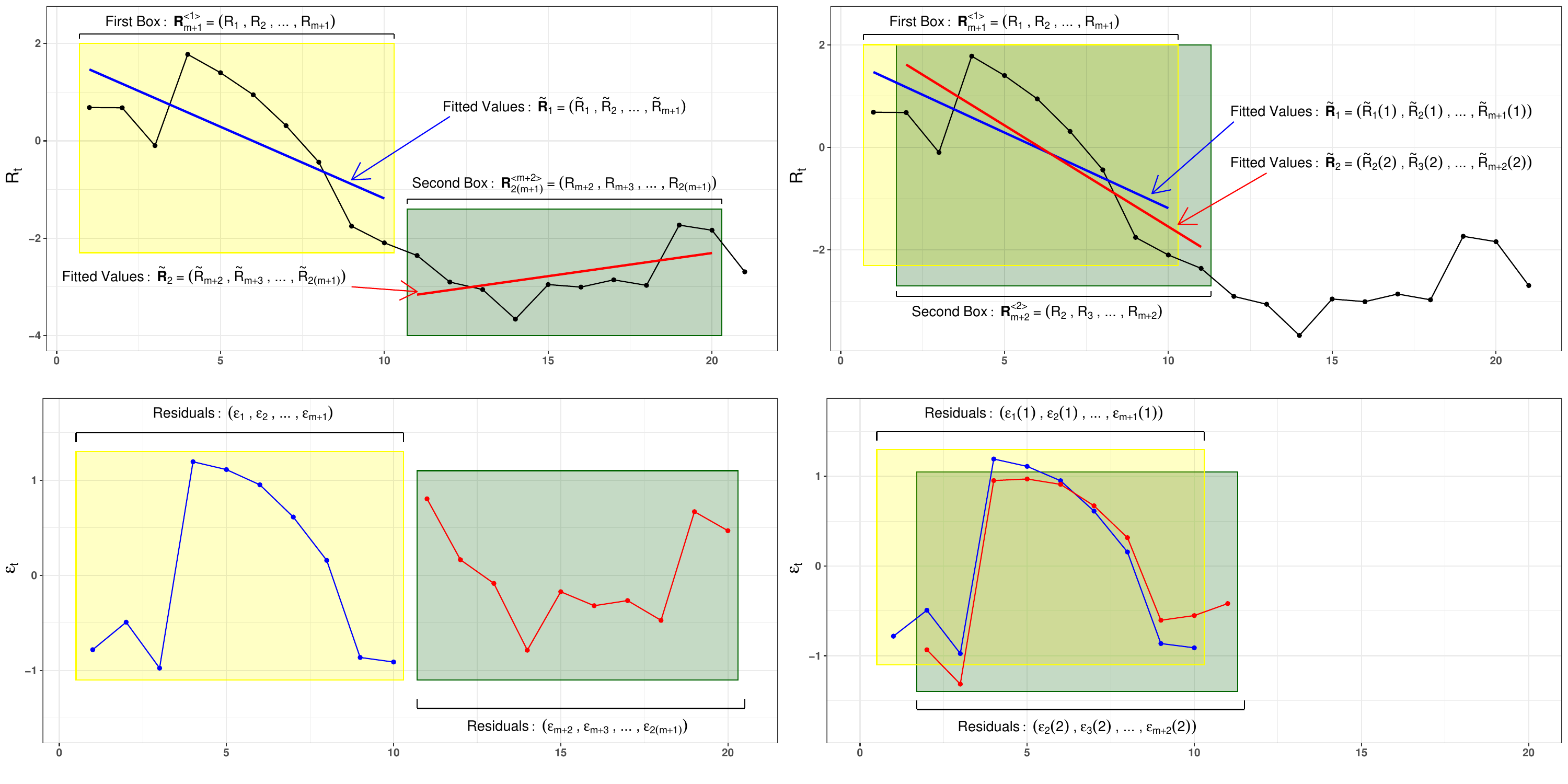}
  \caption{Visual representation of the integrated signal obtained from a stationary process and the corresponding local linear fit (top panels)  and residuals (bottom panels) in two different scenarios.  The left panels correspond to  non-overlapping boxes of size 10 $(m=9)$ while the right panels consider overlapping boxes of the same size.}\label{f:xtandrtbox}
\end{figure}

For $0 < m < n$ and $i \in\{ 1,\dots,n-m\}$, let $f^2_{k,DFA}(m,i)$ be the sample variance of the residuals $\big\{\Eps_{k,t}(i)\big\}_{t = i}^{m+i}$, for $k\in\{1,2\}$,  and  $f_{DCCA}(m,i)$ be the sample covariance between the residuals $\big\{\Eps_{1,t}(i)\big\}_{t = i}^{m+i}$  and $\big\{\Eps_{2,t}(i)\big\}_{t = i}^{m+i}$, corresponding to the $i$th box, that is,
\begin{equation}\label{eq:dfak_i}
f^2_{k,DFA}(m,i)  := \frac{1}{m}\sum_{t = i}^{m+i}\big(R_{k,t} - \tilde R_{k,t}(i)\big)^2 = \frac{1}{m} \bs{\Eps}_{k,i}^\top  \bs{\Eps}_{k,i}, \quad k \in\{ 1,2\},
\end{equation}
and
\begin{equation}\label{eq:dcca_i}
f_{DCCA}(m,i) := \frac{1}{m}\sum_{t = i}^{m+i}\big(R_{1,t} - \tilde R_{1,t}(i)\big)\big(R_{2,t} - \tilde R_{2,t}(i)\big)
= \frac{1}{m} \bs{\Eps}_{1,i}^\top  \bs{\Eps}_{2,i}.
\end{equation}
The {detrended variance} $F^2_{k,DFA}$, $k=1,2$, the {detrended covariance} $F_{DCCA}$ and the {detrended correlation coefficient} $\rho_{DCCA}$ are defined, respectively, by
\begin{equation}
F^2_{k,DFA}(m) := \frac{1}{n-m}\sum_{i = 1}^{n-m} f^2_{k,DFA}(m,i), \quad
F_{DCCA}(m) := \frac{1}{n-m}\sum_{i = 1}^{n-m} f_{DCCA}(m,i),\label{eq:Fs}
\end{equation}
and
\begin{equation}\label{eq:rho_dcca}
\rho_{DCCA}(m) := \frac{F_{DCCA}(m)}{\sqrt{F_{1,DFA}^2(m)} \sqrt{F_{2,DFA}^2(m)}}.
\end{equation}

{Notice that, from \eqref{eq:dfak_i}, \eqref{eq:dcca_i}, and upon invoking the Cauchy-Schwarz inequality twice, we obtain
 \begin{align*}
\bigg|\sum_{i = 1}^{n-m} f_{DCCA}(m,i)\bigg| & \leq \sum_{i = 1}^{n-m} f_{1,DFA}(m,i)f_{2,DFA}(m,i) \leq\sqrt{\sum_{i = 1}^{n-m} f_{1,DFA}^2(m,i)}\sqrt{\sum_{i = 1}^{n-m}f_{2,DFA}^2(m,i) }.
\end{align*}
Hence, just as the classical Pearson correlation coefficient, the DCCA also satisfy $|\rho_{DCCA}(m)|\leq 1$. }

\begin{remark}
  In the literature, the expression in \eqref{eq:dcca_i} and the detrended covariance are often {denoted respectively by} $f^2_{DCCA}(m,i)$ and $F^2_{DCCA}(m)$ instead of $f_{DCCA}(m,i)$ and $F_{DCCA}(m)$. Here we consider the second notation as it is more coherent with traditional notation of variance, covariance and correlation of random variables, namely, $\var(X) = \sigma^2_X$, $\cov(X,Y) = \sigma_{X,Y}$ and $\corr(X,Y) = \rho_{X,Y}$. Moreover, the notation $f^2_{DCCA}(m,i)$ and $F^2_{DCCA}(m)$ is somewhat misleading and can induce the reader to draw the wrong conclusion that the detrended covariance is always positive.
\end{remark}

As mentioned before, the theory presented in \cite{BK2008} for the DFA and \cite{Blythe2013} and \cite{Blythe2016} for the DCCA is developed under a (stationary) fractional Gaussian noise (or fractional Brownian motion) plus a polynomial trend assumption, which results in a {trend-stationary} process. The next result {shows} that the DFA and DCCA are always invariant under polynomial trend, {regardless the underlying distribution}, so that, in a trend-stationary context, {it suffices to} focus on the underlying stationary process. For ease of presentation, {we defer the proof of all results to the Appendix}.

\begin{prop}[Invariance to polynomial trend]\label{inv}
Let $\{X_{1,t}\}_{t\in\Z}$ and $\{X_{2,t}\}_{t\in\Z}$ be any two stochastic process and let $p_{1}$ and $p_{2}$ denote two polynomial of degree $\nu_1$ and $\nu_2$, respectively. Let $Y_{k,t}:=X_{k,t}+p_{k}(t)$, $k=1,2$, and $\nu = \max\{\nu_1, \nu_2\}$. Then the DFA and DCCA of $\{X_{k,t}\}_{t\in\Z}$ and $\{Y_{k,t}\}_{t\in\Z}$ are the same.
\end{prop}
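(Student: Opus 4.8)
The plan is to reduce the whole statement to a single invariance property of the detrended walks $\bs{\Eps}_{k,i}$. By \eqref{eq:dfak_i}, \eqref{eq:dcca_i}, \eqref{eq:Fs} and \eqref{eq:rho_dcca}, each of $f^2_{k,DFA}(m,i)$, $f_{DCCA}(m,i)$, $F^2_{k,DFA}(m)$, $F_{DCCA}(m)$ and $\rho_{DCCA}(m)$ is a function \emph{only} of the vectors $\bs{\Eps}_{1,i}$ and $\bs{\Eps}_{2,i}$. Hence it suffices to show that, box by box, the detrended walk computed from $\{Y_{k,t}\}$ equals the one computed from $\{X_{k,t}\}$. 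First I would compute the integrated signal of $Y_{k,t}$: writing $R^Y_{k,t}=\sum_{j=1}^t Y_{k,j}$, linearity of the sum in \eqref{eq:rt} gives $R^Y_{k,t}=R^X_{k,t}+q_k(t)$, where $q_k(t):=\sum_{j=1}^t p_k(j)$ and $R^X_{k,t}$ is the integrated signal of $\{X_{k,t}\}$.

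The second step is a degree count. I would verify that $q_k$ is itself a polynomial in $t$, of degree $\nu_k+1$. This is the classical fact (Faulhaber's formula, or an induction via finite differences) that summing a polynomial of degree $d$ produces one of degree $d+1$; since $\deg p_k=\nu_k\le\nu$, we obtain $\deg q_k=\nu_k+1\le\nu+1$. This is precisely why the design matrix $D_{m+1}$ in \eqref{eq:XPQ} carries monomials up to degree $\nu+1$ rather than $\nu$: the integration in \eqref{eq:rt} raises the polynomial degree by exactly one, and the fit must be able to absorb the lifted trend.

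The third step identifies the range of $P_{m+1}$. By \eqref{eq:RRi}, the $i$th box in the $Y$-world is $\bs{R}^{Y,\lr{i}}_{k,m+i}=\bs{R}^{X,\lr{i}}_{k,m+i}+\bs{q}_{k,i}$, where $\bs{q}_{k,i}=(q_k(i),\dots,q_k(m+i))'$. Re-indexing the box by its local coordinate $s=1,\dots,m+1$ (so $t=i+s-1$), the entry $q_k(i+s-1)$ is a polynomial in $s$ of degree at most $\nu+1$; since the space of such polynomials is invariant under the shift $s\mapsto i+s-1$, this vector lies in the column space of $D_{m+1}$ by construction, i.e.\ $\bs{q}_{k,i}\in\mathrm{range}(P_{m+1})$. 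As $Q_{m+1}=I_{m+1}-P_{m+1}$ annihilates $\mathrm{range}(P_{m+1})$, we get $Q_{m+1}\bs{q}_{k,i}=0_{m+1}$, whence, from \eqref{eq:epsk},
\[
\bs{\Eps}^{Y}_{k,i}=Q_{m+1}\bs{R}^{Y,\lr{i}}_{k,m+i}=Q_{m+1}\bs{R}^{X,\lr{i}}_{k,m+i}+Q_{m+1}\bs{q}_{k,i}=\bs{\Eps}^{X}_{k,i}.
\]
Substituting this equality back into \eqref{eq:dfak_i}--\eqref{eq:rho_dcca} delivers the claim for both the DFA and the DCCA.

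The main obstacle is really only the bookkeeping in the last two steps: one must confirm that $\sum_{j=1}^t p_k(j)$ stays within degree $\nu+1$, and, crucially, that the polynomial space is shift-invariant so that the \emph{same} projector $Q_{m+1}$ (defined once, in local coordinates $1,\dots,m+1$) kills the lifted trend in every box simultaneously. Both points are elementary, but together they explain why the choice of $\nu+1$ in \eqref{eq:XPQ} is the correct one and why the argument is completely indifferent to whether the boxes overlap or not.
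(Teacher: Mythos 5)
Your proposal is correct and follows essentially the same route as the paper's proof: both reduce the claim to the identity $\bs{\Eps}^{Y}_{k,i}=\bs{\Eps}^{X}_{k,i}$ by showing that integration lifts the trend $p_k$ to a polynomial of degree $\nu_k+1\le\nu+1$ (the paper does this explicitly via Bernoulli polynomials, you via Faulhaber's formula) and that $Q_{m+1}$ annihilates the resulting trend vector because it lies in the column space of $D_{m+1}$. Your explicit remark on shift-invariance of the polynomial space across boxes is a point the paper leaves implicit in the equation $P_{m+1}\bs{T}_{k,m+i}^{\ast\lr{i}}=\bs{T}_{k,m+i}^{\ast\lr{i}}$, but the argument is the same.
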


The invariance to polynomial trend property of the DFA and DCCA {allows extending the results for a} stationary process to non-stationary process that can be written as a polynomial trend plus a stationary signal without loss of generality.

\section{Stationarity results and the theoretical counterpart of $\rho_{DCCA}$\label{sec:3}}

In this section, we derive several results for the DFA and DCCA and also present the theoretical counterpart of $\rho_{DCCA}$. We shall enunciate the results for (jointly) stationary processes only. {Because of Proposition \ref{inv}, the results in this section hold unaltered for trend-stationary processes, except stated otherwise}.

{We start by noticing} that  $P_{m+1}$ (the projection matrix) and $Q_{m+1}$ are bisymmetric,  hermitian and idempotent matrices. Moreover,
\begin{equation}\label{eq:eeps}
\E(\bs{\Eps}_{k,i})  = Q_{m+1}\E(\bs{R}_{k,m+i}^{\lr{i}} ) =  0_{m+1} \qquad \mbox{ if, and only if, } \qquad \E(\bs{R}_{k,m+i}^{\lr{i}} ) = D_{m+1}\bs\beta_i,
\end{equation}
for some $\bs\beta_i \in\R^{m+1}$, or, in other words, if $\E(\bs{R}_{k,m+i}^{\lr{i}} )$ is a polynomial trend of degree at most $\nu+1$. From \eqref{eq:rt}, $ \E(R_{k,t}) =  \sum_{j=1}^t \E(X_{k,j})$,  $k = 1,2$ and  $ t\geq 1$, so that a sufficient condition for \eqref{eq:eeps} to hold is that $\E(X_{k,j})$ does not depend on $j$. {Hence, if $\{X_{k,t}\}_{t\in\Z}$ is stationary, we have}
\[
Q_{m+1} J_{m+1+h}^{\lr{1+h}}\big[\bs{X}_{k,m+1+h}^{\lr{1}} - \E(\bs{X}_{k,m+1+h}^{\lr{1}})\big]= Q_{m+1} J_{m+1+h}^{\lr{1+h}}\bs{X}_{k,m+1+h}^{\lr{1}}, \quad h \geq 0,
\]
{so that, without loss of generality, we can assume that $\E(X_{k,t}) = 0$}.

Notice that, for a fixed $i$,  $\{\Eps_{k,t}(i)\}_{t=i}^{m+i}$ is not an identically distributed sequence. However, Lemma \ref{lem:eps} provides a sufficient condition for the joint stationarity of $\{\bs{\Eps}_{1,i}\}_{i=1}^{n-m}$ and $\{\bs{\Eps}_{2,i}\}_{i=1}^{n-m}$.

\begin{lemma}\label{lem:eps}
{If $\{X_{1,t}\}_{t\in\Z}$ and $\{X_{2,t}\}_{t\in\Z}$ are two jointly strictly stationary processes, then so are} $\{\bs{\Eps}_{1,i}\}_{i=1}^{n-m}$ and $\{\bs{\Eps}_{2,i}\}_{i=1}^{n-m}$.
\end{lemma}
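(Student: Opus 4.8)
The plan is to reduce the whole statement to a single structural identity: although the integrated signal $\{R_{k,t}\}$ is manifestly non-stationary, the detrended walk $\bs{\Eps}_{k,i}$ turns out to be a \emph{fixed} ($i$-independent) linear function of the sliding window $\bsl{X}_{k,m+i}^{\lr{i}} = (X_{k,i},\dots,X_{k,m+i})'$ of the original process. Once this is in hand, joint strict stationarity of $\{\bs{\Eps}_{1,i}\}$ and $\{\bs{\Eps}_{2,i}\}$ follows directly from joint strict stationarity of $\{(X_{1,t},X_{2,t})\}$, because applying one and the same measurable map to shifted windows of a strictly stationary process preserves all finite-dimensional distributions.

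To obtain the identity, I would start from \eqref{eq:RRi}, namely $\bs{R}_{k,m+i}^{\lr{i}}=J_{m+i}^{\lr{i}}\bsl{X}_{k,m+i}^{\lr{1}}$, and split the columns of the $(m+1)\times(m+i)$ matrix $J_{m+i}^{\lr{i}}$ into its first $i-1$ columns and its last $m+1$ columns. The first $i-1$ columns are all-ones columns (every entry equals $1$, since the retained rows are indexed $\geq i$ while $s\leq i-1$), so they contribute $R_{k,i-1}\,1_{m+1}$; the last $m+1$ columns form precisely $J_{m+1}$ and contribute $J_{m+1}\bsl{X}_{k,m+i}^{\lr{i}}$. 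Hence $\bs{R}_{k,m+i}^{\lr{i}} = R_{k,i-1}\,1_{m+1} + J_{m+1}\bsl{X}_{k,m+i}^{\lr{i}}$. Since the first row of $D_{m+1}'$ equals $1_{m+1}'$, the vector $1_{m+1}$ lies in the column space of $D_{m+1}$, whence $P_{m+1}1_{m+1}=1_{m+1}$ and $Q_{m+1}1_{m+1}=0_{m+1}$. Applying $Q_{m+1}$ thus annihilates the level term $R_{k,i-1}\,1_{m+1}$ and gives $\bs{\Eps}_{k,i} = Q_{m+1}J_{m+1}\bsl{X}_{k,m+i}^{\lr{i}} =: A\,\bsl{X}_{k,m+i}^{\lr{i}}$, where $A:=Q_{m+1}J_{m+1}$ is a fixed matrix independent of both $i$ and $k$ (this is in the same spirit as the mean-removal identity displayed just before the lemma).

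With this identity, the conclusion is immediate. For each $i$ the pair $(\bs{\Eps}_{1,i},\bs{\Eps}_{2,i}) = \big(A\,\bsl{X}_{1,m+i}^{\lr{i}},\,A\,\bsl{X}_{2,m+i}^{\lr{i}}\big)$ is the value at window index $i$ of a single, $i$-independent measurable map $\phi$ of the bivariate block $\big\{(X_{1,t},X_{2,t}):i\leq t\leq i+m\big\}$. I would then fix an arbitrary finite set of indices $i_1<\dots<i_r$ and any shift $\tau$, and note that the joint law of $\big((\bs{\Eps}_{1,i_\ell},\bs{\Eps}_{2,i_\ell})\big)_{\ell=1}^{r}$ is the pushforward under $\phi^{\otimes r}$ of the joint law of the corresponding $r$ blocks of $\{(X_{1,t},X_{2,t})\}$. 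Joint strict stationarity of $\{(X_{1,t},X_{2,t})\}_{t\in\Z}$ renders this latter law invariant under $i_\ell\mapsto i_\ell+\tau$, and since $\phi$ is unchanged by the shift, the law of the $\bs{\Eps}$ tuple is likewise invariant, which is exactly the asserted joint strict stationarity.

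The only step requiring genuine insight is the structural identity of the second paragraph: one must recognize that the non-stationarity of $\{R_{k,t}\}$ enters solely through the constant level vector $R_{k,i-1}\,1_{m+1}$, which is destroyed by the detrending projection $Q_{m+1}$. After that observation the argument is purely formal, so I expect $\bs{\Eps}_{k,i}=A\,\bsl{X}_{k,m+i}^{\lr{i}}$ to be the crux, with the transfer of stationarity being routine.
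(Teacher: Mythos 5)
Your proposal is correct and follows essentially the same route as the paper: the paper likewise writes $\bs{R}_{k,m+i}^{\lr{i}}$ as a constant level vector (it subtracts $R_{k,i}1_{m+1}$ where you split off $R_{k,i-1}1_{m+1}$) plus a fixed, $i$-independent linear function of the sliding window of $\{X_{k,t}\}$, uses $Q_{m+1}1_{m+1}=0_{m+1}$ to annihilate the level, and then transfers joint strict stationarity through the common measurable map. Your identity $\bs{\Eps}_{k,i}=Q_{m+1}J_{m+1}\bsl{X}_{k,m+i}^{\lr{i}}$ is exactly the crux the paper relies on.
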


As a consequence of Lemma \ref{lem:eps} we have Corollary \ref{lem:sta}, {which} shows that, if $\{X_{1,t}\}_{t\in\Z}$ and  $\{X_{2,t}\}_{t\in\Z}$ are jointly strictly stationary, then the distribution of $f_{k,DFA}^2(m,i)$ and $f_{DCCA}(m,i)$ do not depend on $i$. This result generalizes lemma 2.2 in \citet{BK2008}, where the authors show that $\{f_{k,DFA}^2(m,i)\}_{i=1}^{\lfloor n/m\rfloor }$, obtained by considering non-overlapping boxes of size $\lfloor n/m \rfloor$, and also lemma 1.3 in \cite{Blythe2016}, where the authors consider only the case where the underlying process is long-range dependent.

\begin{coro}\label{lem:sta}
{If} $\{X_{1,t}\}_{t\in\Z}$ and $\{X_{2,t}\}_{t\in\Z}$ are two jointly strictly stationary processes, {then} both processes, $\{f_{k,DFA}^2(m,i)\}_{i=1}^{n-m}$ and $\{f_{DCCA}(m,i)\}_{i=1}^{n-m}$, are strictly stationary.
\end{coro}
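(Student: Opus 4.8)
The plan is to observe that this corollary is an almost immediate consequence of Lemma \ref{lem:eps}, combined with the elementary fact that strict stationarity is preserved under the coordinatewise application of a fixed measurable map. First I would recall from \eqref{eq:dfak_i} and \eqref{eq:dcca_i} that the three quantities of interest are \emph{fixed} functions of the detrended walk vectors, namely
\[
f^2_{k,DFA}(m,i) = \tfrac{1}{m}\,\bs{\Eps}_{k,i}'\bs{\Eps}_{k,i}, \quad k=1,2, \aand f_{DCCA}(m,i) = \tfrac{1}{m}\,\bs{\Eps}_{1,i}'\bs{\Eps}_{2,i}.
\]
Writing $\bs{Z}_i := \big(\bs{\Eps}_{1,i}{}', \bs{\Eps}_{2,i}{}'\big)' \in \R^{2(m+1)}$ for the stacked detrended walks, I would then introduce the map $\Phi:\R^{2(m+1)} \to \R^3$ that splits its argument into its two halves $\bs{u},\bs{v}\in\R^{m+1}$ and returns $\Phi(\bs{u},\bs{v}) := \big(\tfrac{1}{m}\bs{u}'\bs{u},\, \tfrac{1}{m}\bs{v}'\bs{v},\, \tfrac{1}{m}\bs{u}'\bs{v}\big)$. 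This $\Phi$ is a vector-valued polynomial in the entries of its argument, hence continuous and in particular Borel measurable, and — crucially — it does not depend on $i$. By construction, $\Phi(\bs{Z}_i) = \big(f^2_{1,DFA}(m,i),\, f^2_{2,DFA}(m,i),\, f_{DCCA}(m,i)\big)$.

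Next I would invoke Lemma \ref{lem:eps}, which yields that the $\R^{2(m+1)}$-valued sequence $\{\bs{Z}_i\}_{i=1}^{n-m}$ is strictly stationary. Fixing any indices $i_1,\dots,i_r$ and any shift $h$ with $i_1+h,\dots,i_r+h \in \{1,\dots,n-m\}$, strict stationarity of $\{\bs{Z}_i\}$ gives the equality in distribution
\[
\big(\bs{Z}_{i_1+h}, \dots, \bs{Z}_{i_r+h}\big) \;\stackrel{d}{=}\; \big(\bs{Z}_{i_1}, \dots, \bs{Z}_{i_r}\big).
\]
Applying the fixed measurable map $\Phi$ to each coordinate preserves equality in distribution, so the $\R^3$-valued sequence $\{\Phi(\bs{Z}_i)\}_{i=1}^{n-m}$ is strictly stationary as well. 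Reading off its first two components (for $k=1,2$) and its third component, respectively, delivers the strict stationarity of $\{f^2_{k,DFA}(m,i)\}_{i=1}^{n-m}$ and of $\{f_{DCCA}(m,i)\}_{i=1}^{n-m}$, which is the claim; note that this argument in fact establishes the stronger joint stationarity of the triple.

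I do not anticipate a genuine obstacle here: the substantive work has already been carried out in Lemma \ref{lem:eps}, and the remaining step is the standard stability of strict stationarity under a deterministic measurable transformation. The only point requiring a little care is purely a matter of convention — since the index set $\{1,\dots,n-m\}$ is finite, ``strict stationarity'' must be read in the restricted sense that only shifts $h$ keeping all shifted indices inside $\{1,\dots,n-m\}$ are admitted. This is, however, exactly the convention already implicit in the statement of Lemma \ref{lem:eps}, so no additional argument is needed and the corollary follows at once.
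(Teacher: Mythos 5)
Your proposal is correct and follows exactly the route the paper takes: the paper's own proof of this corollary is a one-line appeal to \eqref{eq:dfak_i}, \eqref{eq:dcca_i} and Lemma \ref{lem:eps}, and your argument simply makes explicit the standard step that strict stationarity is preserved under a fixed measurable map applied coordinatewise. No gaps; you have merely written out what the paper leaves implicit.
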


As mentioned in the introduction, {the DCCA and the DFA} are usually defined in terms of a sample from {a given} underlying process, as in \eqref{eq:Fs} and \eqref{eq:rho_dcca}. {Thus, they} can be viewed as an estimator of some quantity. {It is interesting to notice, however, that the literature does not mention the DCCA's theoretical counterpart}. To fill in this gap, observe that  as a consequence of Corollary \ref{lem:sta} and \eqref{eq:Fs},
 \[
\E\big(F^2_{k,DFA}(m)\big) = \frac{1}{n-m}\sum_{i = 1}^{n-m}\E\big(f^2_{k,DFA}(m,i)\big) = \E\big(f^2_{k,DFA}(m,1)\big) = \frac{1}{m}\E\big(\bs\Eps_{k,1}^\top  \bs\Eps_{k,1}\big)
\]
and
\[
\E\big(F_{DCCA}(m)\big) = \frac{1}{n-m}\sum_{i = 1}^{n-m} \E\big(f_{DCCA}(m,i)\big) = \E\big(f_{DCCA}(m,1)\big) = \frac{1}{m}\E\big(\bs\Eps_{k,1}^\top  \bs\Eps_{k,2}\big).
\]
Hence, the theoretical counterpart of $\rho_{DCCA}(m)$  is given by
\begin{equation}\label{eq:thcount}
\rho_{\bs\Eps}(m)  := \frac{\E\big(F_{DCCA}(m)\big)}{\sqrt{\E\big(F^2_{1,DFA}(m)\big)}\sqrt{\E\big(F^2_{2,DFA}(m)\big) }}
= \frac{\sum_{t=i}^{m+i}\cov(\Eps_{1,t}(i),\Eps_{2,t}(i))}{\sqrt{\sum_{t=i}^{m+i}\var(\Eps_{1,t}(i)) }\sqrt{\sum_{t=i}^{m+i}\var(\Eps_{2,t}(i)) }},
\end{equation}
 $0 < m<n,$  $1 \leq i \leq n-m$,  where  $\bs{\Eps}_{k,i} = \big(\Eps_{k,i}(i), \dots, \Eps_{k,m+i}(i)\big)^\top $ is defined by \eqref{eq:epsk}, $k\in\{1,2\}$.

A closer look into \eqref{eq:thcount} suggests why the theoretical counterpart of the $\rho_{DCCA}$ is never mentioned in the literature and also why applications only focus on its decay: there is no simple interpretation for \eqref{eq:thcount}. Observe that \eqref{eq:thcount} can be rewritten as the average covariance divided by the square root of the average variances corresponding to the processes $\big\{\Eps_{1,t}(i)\big\}_{t = i}^{m+i}$  and $\big\{\Eps_{2,t}(i)\big\}_{t = i}^{m+i}$, which are the residuals of a local polynomial fit applied to the $i$th window associated to the integrated processes $\{R_{1,t}\}_{t=1}^n$ and $\{R_{2,t}\}_{t=1}^n$. Hence, it is clear that $\rho_{\bs\Eps}(m)$ is not a direct measure of the cross-correlation between the original processes. It is also obvious that $\rho_{DCCA}(m)$ is a biased estimator for $\rho_{\bs\Eps}(m)$, for fixed $n$. However, {as} it will be shown in the sequel, { $\rho_{\bs\Eps}(m)$ is consistent under some mild conditions.}

Another consequence of Corollary \ref{lem:sta} is that, if  $f_{k,DFA}^2(m,i)$ and $f_{DCCA}(m,i)$ have finite variance, then
\begin{equation}\label{eq:c1}
\gamma_{k,\mbox{\tiny DFA}}(h) :=  \cov \big(f^2_{k,DFA}(m,1),f_{k,DFA}^2(m,1+h)\big) = \cov \big(f^2_{k,DFA}(m,i),f_{k,DFA}^2(m,i+h)\big),
\end{equation}
\begin{equation}\label{eq:c2}
\gamma_{\mbox{\tiny DCCA}}(h) := \cov \big(f_{DCCA}(m,1),f_{DCCA}(m,1+h)\big) = \cov \big(f_{DCCA}(m,i),f_{DCCA}(m,i+h)\big), \end{equation}
for all $i \geq 1$ and $h \geq 0$ for which the last terms in \eqref{eq:c1} and \eqref{eq:c2} make sense.  Closed expressions for \eqref{eq:c1} and \eqref{eq:c2} shall be derived in the sequel, but first we need to introduce some notation.

For any $k_1, k_2 = 1,2$, $0 < m < n$ and $0 \leq  h_1, h_2 < n-m$,  let
\[
   \Gamma_{k_1,k_2}^{h_1,h_2}  :=  \cov\Big(\bs{X}_{k_1,m+1+h_1}^{\lr{1}}, \bs{X}_{k_2,m+1+h_2}^{\lr{1}}\Big), \qquad
    \Sigma_{k_1,k_2}^{h_1, h_2} := \cov\Big(\bs{R}_{k_1,m+1+h_1}^{\lr{h_1 + 1}},\bs{R}_{k_2,m+1+h_2}^{\lr{h_2+1}}\Big),
\]
and observe that, from \eqref{eq:RRi},
\begin{equation}\label{eq:SG}
\Sigma_{k_1,k_2}^{h_1, h_2}   = J_{m+1+h_1}^{\lr{h_1}}\Gamma_{k_1,k_2}^{h_1,h_2}\Big[J_{m+1+h_2}^{\lr{h_2}}\Big]^\top
\end{equation}
where $J_{m+1+h}$ is defined by \eqref{eq:J}, for $h \geq 0$. Let $\ka_{k_1, k_2}(p,r,q,s)$ denotes the joint cumulant of $X_{k_1,p}, X_{k_1,r},X_{k_2,q}, X_{k_2,s}$ and let $\K_{k_1,k_2}(h)$ be the  $[(m+1)(m+1+h)] \times [(m+1)(m+1+h)]$ block matrix, for which the  $(r,s)$th element in the $(p,q)$th block is given by
\begin{equation}\label{eq:cumu}
\Big[\big[\K_{k_1,k_2}(h)\big]^{p,q}\Big]_{r,s}  := \ka_{k_1, k_2}(p,r,q,s), \quad 1\leq p,q \leq m+1, \quad 1\leq r,s\leq m+1+h.
\end{equation}
For sake of simplicity, for any $h,h_1,h_2 \geq 0$  and $k,k_1,k_2 \in\{ 1,2\}$, define
\begin{equation}\label{eq:Gammak}
  \Gamma_k^{h_1,h_2} :=  \Gamma_{k,k}^{h_1,h_2}, \quad\Gamma_k :=  \Gamma_{k,k}^{0,0}, \quad  \Sigma_k^{h_1,h_2} := \Sigma_{k,k}^{h_1,h_2}, \quad  \Sigma_k := \Sigma_{k,k}^{0,0},
\end{equation}
\begin{equation}\label{eq:Gamma12}
  \Gamma_{1,2} :=  \Gamma_{1,2}^{0,0}, \quad \Sigma_{1,2} := \Sigma_{1,2}^{0,0}, \quad  \K_{k}(h) := \K_{k,k}(h), \quad \K_{k} := \K_{k}(0) \aand  \K_{k_1,k_2} := \K_{k_1,k_2}(0).
  \end{equation}
Moreover, let $K_{m+1} = K_{m+1}(0) := J_{m+1}^\top Q_{m+1} J_{m+1}$ and observe that,  for all $h > 0$,
\begin{equation}
J_{m+1+h}^{\lr{h+1}} = [1_{m+1,h} \ J_{m+1}] \implies
 K_{m+1}(h):= \Big[J_{m+1+h}^{\lr{h+1}}\Big]^\top Q_{m+1}J_{m+1+h}^{\lr{h+1}} =
\begin{bmatrix}
0_{h,h} & 0_{h,m+1}\\
0_{m+1,h} & K_{m+1}\\
 \end{bmatrix}. \label{eq:Km}
 \end{equation}
 Also, let $K_{m+1}^\otimes = K_{m+1}^\otimes(0) := K_{m+1}\otimes K_{m+1}$ and,  for $h > 0$,
\begin{equation}
  K_{m+1}^\otimes(h)   := \Big[J_{m+1}\otimes J_{m+1+h}^{\lr{h+1}}\Big]^\top \big(Q_{m+1}\otimes Q_{m+1}\big)\Big[J_{m+1}\otimes J_{m+1+h}^{\lr{h+1}}\Big]   = K_{m+1} \otimes K_{m+1}(h).  \label{eq:Kmo}
\end{equation}

Theorem \ref{thm:expec} presents closed form expressions for the expectation, variance and covariance function related to the processes $\{f_{k,DFA}^2(m,i)\}_{i=1}^{n-m}$ and $\{f_{DCCA}(m,i)\}_{i=1}^{n-m}$, under joint stationarity and finite fourth moment assumptions for $\{X_{1,t}\}_{t\in\Z}$ and $\{X_{2,t}\}_{t\in\Z}$. This result is a generalization of the results presented in \citet{BK2008} for the DFA and in \cite{Blythe2016} for the DCCA, where the authors consider non-overlapping windows and a fractional Gaussian noise (plus a polynomial trend) as the underlying process. Moreover, while the expressions derived in \cite{Blythe2016} are presented in terms of the covariance matrices related to the integrated process $\{(R_{1,t}, R_{2,t})\}_{t=1}^n$, the results given in Theorem \ref{thm:expec} are written in terms of the covariance matrices related to the original process $\{(X_{1,t}, X_{2,t})\}_{t \in \Z}$, which is often more useful.

\begin{thm}\label{thm:expec}
Let $\{X_{1,t}\}_{t\in\Z}$ and $\{X_{2,t}\}_{t\in\Z}$ be two jointly  strictly stationary stochastic processes with $\E(|X_{k,t}|^4) < \infty$, $k=1,2$.
Then, for all $0 < m < n$, $1 \leq  i \leq n-m$, $0  \leq h < n-m$ and $k \in\{ 1,2\}$,
\begin{align}
 & \E\big(f^2_{k,DFA}(m,i)\big)=  \frac{1}{m}\trace\big(K_{m+1}\Gamma_{k}\big), \label{eq:Edfai}\\
& \gamma_{k,\mbox{\tiny DFA}}(0) =  \frac{1}{m^2}\Big[\trace\big(K_{m+1}^\otimes\K_{k}\big) + 2\trace\big(K_{m+1}\Gamma_{k}K_{m+1}\Gamma_{k}\big)\Big], \label{eq:Vardfai}\\
 &\gamma_{k,\mbox{\tiny DFA}}(h)  =\frac{1}{m^2}\Big[\trace\big(K_{m+1}^\otimes(h)\K_{k}(h)\big) + 2\trace\big(K_{m+1}\Gamma_{k}^{0,h}K_{m+1}(h)\Gamma_{k}^{h,0}\big)\Big],\label{eq:Covdfai}
\end{align}
and
\begin{align}
&\E\big(f_{DCCA}(m,i)\big) = \frac{1}{m}\trace\big(K_{m+1}\Gamma_{1,2}\big), \label{eq:Edccai}\\
 &\gamma_{\mbox{\tiny DCCA}}(0) =  \frac{1}{m^2}\Big[\trace\big(K_{m+1}^\otimes\K_{1,2}\big) + \trace\big(K_{m+1}\Gamma_{1}K_{m+1}\Gamma_{2}\big) + \trace\big(K_{m+1}\Gamma_{1,2}K_{m+1}\Gamma_{1,2}\big)\Big], \label{eq:Vardccai}\\
&\gamma_{\mbox{\tiny DCCA}}(h) =  \frac{1}{m^2}\Big[\trace\big(K_{m+1}^\otimes\K_{1,2}(h)\big) + \trace\big(K_{m+1}\Gamma_{1}^{0,h}K_{m+1}(h)\Gamma_{2}^{h,0}\big) + \trace\big(K_{m+1}\Gamma_{1,2}^{0,h}K_{m+1}(h)\Gamma_{1,2}^{h,0}\big)\Big], \label{eq:Covdccai}
\end{align}
with $\Gamma_{k}$, $\Gamma_{k}^{h_1,h_2}$,  $\Gamma_{1,2}$, $\Gamma_{1,2}^{h_1,h_2}$,  $\K_{k}$,  $\K_{k}(h)$, $\K_{1,2}$,  $\K_{1,2}(h)$,   $K_{m+1}$,  $K_{m+1}^\otimes$, $K_{m+1}^\otimes(h)$ defined in \eqref{eq:cumu} - \eqref{eq:Kmo}, $h_1,h_2 \in\{0,h\}$.
\end{thm}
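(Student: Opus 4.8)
The plan is to reduce every quantity to a quadratic or bilinear form in the original series and then apply the moment--cumulant expansion. By Corollary~\ref{lem:sta} the laws of $f^2_{k,DFA}(m,i)$ and $f_{DCCA}(m,i)$ do not depend on $i$, so it suffices to treat the pair of boxes $i=1$ and $i=1+h$; by the centering discussion preceding the theorem I may also assume $\E(X_{k,t})=0$. Since $Q_{m+1}$ is symmetric and idempotent, $f^2_{k,DFA}(m,1)=\frac1m(\bs{R}_{k,m+1}^{\lr{1}})'Q_{m+1}\bs{R}_{k,m+1}^{\lr{1}}$ and $f_{DCCA}(m,1)=\frac1m(\bs{R}_{1,m+1}^{\lr{1}})'Q_{m+1}\bs{R}_{2,m+1}^{\lr{1}}$, with analogous expressions for box $1+h$. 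The key step is to push these down to the level of $\{X_{k,t}\}$: by \eqref{eq:RRi} the two integrated boxes equal $J_{m+1}\bsl{X}_{k,m+1}^{\lr{1}}$ and $J_{m+1+h}^{\lr{h+1}}\bsl{X}_{k,m+1+h}^{\lr{1}}$, so that, embedding both into the single vector $\bsl{X}_{k,m+1+h}^{\lr{1}}$ by means of the selection matrix $E:=[\,I_{m+1}\ \ 0_{m+1,h}\,]$, all four forms become forms in $\bsl{X}_{k,m+1+h}^{\lr{1}}$ whose defining matrices are $E'K_{m+1}E$ (box $1$) and $K_{m+1}(h)$ (box $1+h$), where $K_{m+1}=J_{m+1}'Q_{m+1}J_{m+1}$ and $K_{m+1}(h)$ is as in \eqref{eq:Km}.

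For the means \eqref{eq:Edfai} and \eqref{eq:Edccai} I would use $\E(\bs{Y}'M\bs{Y})=\trace(M\cov(\bs{Y}))$ (valid for centered $\bs{Y}$) together with $\cov(\bsl{X}_{k,m+1}^{\lr{1}})=\Gamma_k$; the cyclic invariance of the trace turns $J_{m+1}'Q_{m+1}J_{m+1}$ into $K_{m+1}$ and yields $\frac1m\trace(K_{m+1}\Gamma_k)$. For $F_{DCCA}$ the same identity for a bilinear form, combined with \eqref{eq:SG} in the form $\Sigma_{1,2}=J_{m+1}\Gamma_{1,2}J_{m+1}'$, gives $\frac1m\trace(K_{m+1}\Gamma_{1,2})$.

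The heart of the proof is the second-order structure. Writing each product of two forms as a fourfold sum and expanding the fourth moment $\E(Y_aY_bY_cY_d)$ as the fourth cumulant plus its three pairwise-covariance products, the pairing that factorises as $\E(\text{form}_1)\E(\text{form}_2)$ cancels against the product of expectations in the covariance. The surviving two covariance pairings yield trace-of-product terms, while the cumulant yields the $\K$-term. For the latter, using the entrywise rule $[A\otimes B]_{(p,r),(q,s)}=A_{pq}B_{rs}$, the block definition \eqref{eq:cumu}, the full symmetry of cumulants, and the symmetry of $K_{m+1}$ and $K_{m+1}(h)$, the fourfold cumulant sum collapses to $\trace(K_{m+1}^\otimes(h)\K_k(h))$ for the DFA and to $\trace(K_{m+1}^\otimes(h)\K_{1,2}(h))$ for the DCCA, with $K_{m+1}^\otimes(h)$ as in \eqref{eq:Kmo}. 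For the covariance pairings I would convert the full matrices $\Gamma^{h,h}$ to their leading blocks through $E\Gamma_k^{h,h}=\Gamma_k^{0,h}$ and $\Gamma_k^{h,h}E'=\Gamma_k^{h,0}$, and then tidy the resulting traces using cyclicity, transpose-invariance, and the symmetry of $K_{m+1}$, $K_{m+1}(h)$ and $\Gamma_k$. A structural point worth isolating: in the DFA case the two surviving pairings are identical, producing the single trace with a factor $2$ in \eqref{eq:Covdfai}; in the DCCA case they are genuinely different — one couples $\Gamma_1^{0,h}$ with $\Gamma_2^{h,0}$, the other couples $\Gamma_{1,2}^{0,h}$ with $\Gamma_{1,2}^{h,0}$ — which is precisely why \eqref{eq:Vardccai} and \eqref{eq:Covdccai} carry two distinct traces instead. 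Taking $h=0$ then specialises the lagged identities to the variances \eqref{eq:Vardfai} and \eqref{eq:Vardccai}.

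The hard part will be the cumulant term's bookkeeping: aligning the free fourfold index sum with the Kronecker/block layout of $\K_k(h)$ and $\K_{1,2}(h)$ in \eqref{eq:cumu}, which forces one to permute cumulant arguments into the canonical $(p,r,q,s)$ order and to absorb the resulting index swaps into the symmetries of $K_{m+1}$ and $K_{m+1}(h)$. The secondary obstacle, specific to the DCCA, is keeping the two cross-covariance pairings separate and correctly identifying which lands on the pair $(\Gamma_1,\Gamma_2)$ and which on the asymmetric blocks $\Gamma_{1,2}^{0,h}$, $\Gamma_{1,2}^{h,0}$.
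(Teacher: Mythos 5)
Your plan is correct and follows essentially the same route as the paper's proof: reduction to boxes $1$ and $1+h$ via Corollary \ref{lem:sta} and centering, the trace identity $\E(\bs{Y}'M\bs{Y})=\trace(M\cov(\bs{Y}))$ for the means, and the moment--cumulant expansion of the fourth moments in which the within-box pairing cancels against the product of expectations, the two cross-box pairings produce the $\Gamma$-traces (coinciding for DFA, distinct for DCCA), and the cumulant term collapses to $\trace(K_{m+1}^\otimes(h)\K_{k_1,k_2}(h))$. The only cosmetic difference is that you embed both boxes into the single vector $\bsl{X}_{k,m+1+h}^{\lr{1}}$ via a selection matrix, while the paper keeps the two $X$-vectors separate and works with their Kronecker product; these are equivalent bookkeeping devices.
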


\begin{remark}
For non-overlapping windows, we define
 \[
 \Sigma_{k_1,k_2}^{h_1, h_2} := \cov\Big(\bs{R}_{k_1,(m+1)(h_1 + 1)}^{\lr{(m+1)h_1 + 1}},\bs{R}_{k_2,(m+1)(h_2 + 1)}^{\lr{(m+1)h_2 + 1}}\Big), \quad h_1, h_2 \geq 0,
 \]
 so that
 \[
 \Sigma_{k_1,k_2}^{h_1, h_2}   = J_{(m+1)(h_1+1)}^{\lr{(m+1)h_1 + 1}}\Gamma_{k_1,k_2}^{h_1,h_2}\Big[J_{(m+1)(h_2+1)}^{\lr{(m+1)h_2 + 1}}\Big]^\top , \quad \Gamma_{k_1,k_2}^{h_1,h_2}  :=  \cov\Big(\bs{X}_{k_1,(m+1)(h_1+1)}^{\lr{1}}, \bs{X}_{k_2,(m+1)(h_2+1)}^{\lr{1}}\Big).
 \]
Also, in \eqref{eq:cumu} make $1\leq r,s\leq (m+1)(h+1)$ and in \eqref{eq:Km} and \eqref{eq:Kmo} replace $J_{m+1+h}^{\lr{h+1}}$ with $J_{(m+1)(h+1)}^{\lr{(m+1)h+1}}$.  Theorem \ref{thm:expec} remains unchanged.
\end{remark}

As mentioned before, $\rho_{DCCA}(m)$ is a biased estimator for \eqref{eq:thcount}. However, using the results derived in Theorem \ref{thm:expec},  Theorem \ref{thm:convDCCA} provides sufficient conditions for consistence and almost sure convergence of $\rho_{DCCA}(m)$.  As a direct consequence of this theorem, it is showed that $\rho_{DCCA}(m)$ is asymptotically unbiased.

\begin{thm}\label{thm:convDCCA}
Let $\{X_{1,t}\}_{t\in\Z}$ and $\{X_{2,t}\}_{t\in\Z}$ be two jointly stationary processes.
If $\gamma_{k,\mbox{\tiny DFA}}(h) \to 0$ and $\gamma_{\mbox{\tiny DCCA}}(h) \to 0$, as $h\to \infty$,
then
  \[
F_{k, DFA}^2(m)  \overset{P}{\To}  \E\big(f_{k, DFA}^2(m,1) \big) = \frac{1}{m}\trace\big(K_{m+1}\Gamma_{k}\big), \as n\to \infty,
\]
and
 \[
F_{ DCCA}(m)  \overset{P}{\To} \E\big(f_{DCCA}(m,1) \big) = \frac{1}{m}\trace\big(K_{m+1}\Gamma_{1,2}\big), \as n\to \infty.
\]
Moreover,
\[
\rho_{DCCA}(m)  \overset{P}{\To} \frac{ \trace\big(K_{m+1}\Gamma_{1,2}\big)}{\sqrt{  \trace\big(K_{m+1}\Gamma_{1}\big) \trace\big(K_{m+1}\Gamma_{2}\big)}} = \rho_{\bs\Eps}(m), \as n\to \infty,
\]
where  $\Gamma_{k}$, $\Gamma_{1,2}$ and $K_{m+1}$ are defined, respectively,  by  \eqref{eq:Gammak},   \eqref{eq:Gamma12} and  \eqref{eq:Km}.
Furthermore, if
\[
\sum_{h = 1}^\infty \frac{|\gamma_{k,DFA}(h)|}{h^{q_k}} < \infty, \qquad \sum_{h = 1}^\infty \frac{|\gamma_{DCCA}(h)|}{h^{q_{12}}} < \infty,
\]
for some $0 \leq q_k, q_{12} < 1$, then these convergences hold almost surely.
\end{thm}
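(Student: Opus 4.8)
The plan is to treat each of $F_{k,DFA}^2(m)$ and $F_{DCCA}(m)$ as the sample (Ces\`aro) mean of a strictly stationary sequence, reduce the statement to a weak and a strong law of large numbers for such means, and then obtain the convergence of $\rho_{DCCA}(m)$ from the continuous mapping theorem. Concretely, set $N:=n-m$ and let $\{Z_i\}_{i=1}^{N}$ denote any one of the three sequences $\{f_{k,DFA}^2(m,i)\}$ ($k=1,2$) or $\{f_{DCCA}(m,i)\}$, which are strictly stationary by Corollary \ref{lem:sta}. Write $\mu:=\E(Z_1)$, $\gamma(h):=\cov(Z_1,Z_{1+h})$ and $\bar Z_N:=N^{-1}\sum_{i=1}^{N}Z_i$; by \eqref{eq:Fs} the three statistics are exactly the corresponding $\bar Z_N$, and the target limits $\mu$ are the closed forms \eqref{eq:Edfai} and \eqref{eq:Edccai} from Theorem \ref{thm:expec}. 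It therefore suffices to show $\bar Z_N\overset{P}{\To}\mu$ under $\gamma(h)\to0$, and $\bar Z_N\to\mu$ almost surely under the summability hypothesis.

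For the weak law I would argue in $L^2$. Since the $Z_i$ are stationary with finite variance, $\var(\bar Z_N)=N^{-2}\sum_{i,j=1}^{N}\gamma(i-j)=N^{-1}\gamma(0)+2N^{-2}\sum_{h=1}^{N-1}(N-h)\gamma(h)$, whence $\var(\bar Z_N)\le N^{-1}\gamma(0)+2N^{-1}\sum_{h=1}^{N-1}|\gamma(h)|$. Because $\gamma(h)\to0$, the Ces\`aro average $N^{-1}\sum_{h=1}^{N-1}|\gamma(h)|\to0$, so $\var(\bar Z_N)\to0$ and $\bar Z_N\to\mu$ in $L^2$, hence in probability. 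Applying this to all three sequences and noting that $x/\sqrt{yz}$ is continuous at the limit point, provided the denominator limits $\frac1m\trace(K_{m+1}\Gamma_k)$ are strictly positive (which holds whenever the detrended walks are non-degenerate), the continuous mapping theorem yields $\rho_{DCCA}(m)\overset{P}{\To}\rho_{\bs\Eps}(m)$, the $1/m$ factors cancelling as in \eqref{eq:thcount}.

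For the almost sure statement I would upgrade the $L^2$ bound using the summability hypothesis together with a subsequence-plus-maximal-inequality scheme. The key quantitative input is that $\sum_{h=1}^{N}|\gamma(h)|=\sum_{h=1}^{N}h^{q}\,h^{-q}|\gamma(h)|\le N^{q}\sum_{h\ge1}h^{-q}|\gamma(h)|=O(N^{q})$, so that, writing $S_N:=\sum_{i=1}^N(Z_i-\mu)$, the previous bound gives $\var(S_N)=O(N^{1+q})$ with $q\in[0,1)$. Choosing a polynomial subsequence $N_j:=\lfloor j^{\beta}\rfloor$ with $\beta>1/(1-q)$ makes $\sum_j\var(S_{N_j})/N_j^{2}=\sum_j O(j^{\beta(q-1)})<\infty$, so $S_{N_j}/N_j\to0$ almost surely, since $\sum_j\E[(S_{N_j}/N_j)^2]<\infty$ forces $\sum_j (S_{N_j}/N_j)^2<\infty$ almost surely. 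To fill the gaps $N_j\le N<N_{j+1}$ I would bound $\E\big[\max_{N_j<N\le N_{j+1}}(S_N-S_{N_j})^2\big]$ by a M\'oricz/Serfling maximal inequality: since by stationarity $\E\big[(\sum_{i=a+1}^{a+b}(Z_i-\mu))^2\big]=\var(S_b)\le Cb^{1+q}$ and $b\mapsto b^{1+q}$ is superadditive (as $1+q\ge1$), the maximal inequality costs only a factor $(\log N_j)^2$; together with $N_{j+1}-N_j=O(j^{\beta-1})$ this makes the corresponding series summable, so the maximal fluctuations divided by $N_j$ vanish almost surely. Combining the subsequence limit with the gap bound gives $\bar Z_N\to\mu$ almost surely for each sequence, and the almost sure continuous mapping theorem delivers $\rho_{DCCA}(m)\to\rho_{\bs\Eps}(m)$ almost surely.

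The routine parts are the variance identity and the Ces\`aro step; the delicate point is the gap-filling in the strong law, where the $Z_i$ are genuinely dependent and an ordinary, independence-based maximal inequality is unavailable. The crux is therefore to verify the hypotheses of a maximal inequality valid for correlated summands — essentially the superadditivity of the variance bound $Cb^{1+q}$ — and to check that the resulting logarithmic loss is still beaten by the polynomial gain from the sparse subsequence; a secondary point is ensuring that the denominators $\frac1m\trace(K_{m+1}\Gamma_k)$ stay bounded away from zero, so that the continuous mapping step is legitimate.
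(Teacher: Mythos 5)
Your proposal is correct, and its overall skeleton matches the paper's: both reduce the claim to a law of large numbers for the strictly stationary sequences $\{f^2_{k,DFA}(m,i)\}_i$ and $\{f_{DCCA}(m,i)\}_i$ (via Corollary \ref{lem:sta}), identify the limits with the closed forms \eqref{eq:Edfai} and \eqref{eq:Edccai}, and finish with the continuous mapping theorem. The weak-law step is essentially identical (the paper's Lemma \ref{lem:convP} proves $\var(\bar Y_n)\to 0$ by splitting the covariance sum at a fixed lag $h_0$, which is just your Ces\`aro argument in a different notation). Where you genuinely diverge is the almost-sure part: the paper verifies the hypotheses of an off-the-shelf strong law for correlated, non-independent sequences (Theorem 1 of \citet{Huea2008}, checked through the conditions $\sum_n \var(Y_n)[\log n]^2/n^2<\infty$ and $\sup_n\cov(Y_n,Y_{n+h})\le|\gamma(h)|$ with $\rho_h=|\gamma(h)|$), whereas you give a self-contained proof via the bound $\var(S_N)=O(N^{1+q})$, a polynomial subsequence $N_j=\lfloor j^\beta\rfloor$ with $\beta(1-q)>1$, and a M\'oricz--Serfling maximal inequality (legitimate here, since $b\mapsto Cb^{1+q}$ is superadditive for $q\ge 0$, and the resulting $(\log)^2$ loss is dominated by the polynomial decay you computed). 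Your route costs more work but is elementary and transparent about exactly which second-order information is used; the paper's is shorter at the price of an external citation. One further point in your favour: you explicitly flag that the continuous-mapping step needs the limits $\frac1m\trace(K_{m+1}\Gamma_k)$ to be strictly positive (non-degenerate detrended walks), a non-degeneracy condition the paper applies tacitly.
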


{In order to derive the limit in probability of $F_{k, DFA}^2(m)$, $F_{ DCCA}(m)$ and $\rho_{DCCA}(m)$, as $n\to\infty$, Theorem \ref{thm:convDCCA} requires that $\gamma_{k,\mbox{\tiny DFA}}(h) \to 0$ and $\gamma_{\mbox{\tiny DCCA}}(h) \to 0$, as $h\to \infty$. These conditions are very mild, but can still be difficult to verify in some specific contexts.
%The hypothesis on {although very mild, they can still} be difficult to verify in some specific contexts.
Proposition \ref{lem:cond} presents sufficient conditions  for the hypothesis of Theorem \ref{thm:convDCCA} to hold. These conditions are related to the behavior of the cross-covariance and joint cumulants of the original time series, often much simpler to verify}.
\begin{prop}\label{lem:cond}
Suppose that, for $k_1,k_2\in\{ 1,2\}$ and any $p,q,\tau >0$ fixed,
\[
 \gamma_{k_1,k_2}(h) \to 0, \mbox{as } |h| \to \infty \aand \ka_{k_1,k_2}(p,h+\tau,p,h+q) \to 0, \as h \to \infty,
\]
where {$\gamma_{k_1,k_2}(h):=\cov(X_{k_1,t},X_{k_2,t+h})$}. Then $\gamma_{k,\mbox{\tiny DFA}}(h)  \to 0$ and $\gamma_{\mbox{\tiny DCCA}}(h) \to 0$,  as  $h \to \infty$.
\end{prop}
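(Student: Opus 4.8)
The plan is to argue directly from the closed forms for $\gamma_{k,\mbox{\tiny DFA}}(h)$ and $\gamma_{\mbox{\tiny DCCA}}(h)$ obtained in Theorem \ref{thm:expec}, equations \eqref{eq:Covdfai} and \eqref{eq:Covdccai}. Since $m$ is fixed, the factor $1/m^2$ is immaterial and it is enough to show that each of the finitely many trace terms inside the brackets tends to $0$ as $h\to\infty$. I would split these into two families: the \emph{second-order} traces, built from the covariance blocks $\Gamma_{k}^{0,h},\Gamma_{k}^{h,0},\Gamma_{1,2}^{0,h},\Gamma_{1,2}^{h,0}$, and the \emph{fourth-order} traces, built from the cumulant matrices $\K_{k}(h)$ and $\K_{1,2}(h)$. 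The engine of the whole argument is the sparsity of $K_{m+1}(h)$ recorded in \eqref{eq:Km}: it vanishes outside its bottom-right $(m+1)\times(m+1)$ block, which equals the fixed matrix $K_{m+1}$.

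For the second-order terms, take $\trace\big(K_{m+1}\Gamma_{k}^{0,h}K_{m+1}(h)\Gamma_{k}^{h,0}\big)$ as the model case. Because $K_{m+1}(h)$ is supported on the indices $h+1,\dots,h+m+1$, the trace depends only on the last $m+1$ columns of $\Gamma_{k}^{0,h}$ and the last $m+1$ rows of $\Gamma_{k}^{h,0}$. Every entry of those surviving blocks is of the form $\gamma_{k,k}(s-r)$ with the lag $s-r$ ranging in $\{h-m,\dots,h+m\}$, hence growing with $h$. By the first hypothesis these entries tend to $0$; as $K_{m+1}$ is fixed and the trace is a finite sum of bounded weights times these vanishing covariances, the term tends to $0$. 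The same computation, with the appropriate pair $k_1,k_2$, disposes of $\trace\big(K_{m+1}\Gamma_{1}^{0,h}K_{m+1}(h)\Gamma_{2}^{h,0}\big)$ and of the cross term $\trace\big(K_{m+1}\Gamma_{1,2}^{0,h}K_{m+1}(h)\Gamma_{1,2}^{h,0}\big)$, whose surviving entries are $\gamma_{1,2}(s-r)$ at lags of order $h$.

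For the fourth-order terms, I would expand $\trace\big(K_{m+1}^\otimes(h)\K_{k}(h)\big)$ via the block identity $\trace(AB)=\sum_{p,q}\trace\big([A]^{p,q}[B]^{q,p}\big)$ together with the Kronecker form $K_{m+1}^\otimes(h)=K_{m+1}\otimes K_{m+1}(h)$ from \eqref{eq:Kmo}, which gives $[K_{m+1}^\otimes(h)]^{p,q}=[K_{m+1}]_{p,q}\,K_{m+1}(h)$. Using the definition \eqref{eq:cumu} of the cumulant blocks and once more the support of $K_{m+1}(h)$, the trace collapses to the finite sum
\[
\sum_{p,q=1}^{m+1}\sum_{a,b=1}^{m+1}[K_{m+1}]_{p,q}\,[K_{m+1}]_{a,b}\,\ka_{k,k}(q,h+b,p,h+a),
\]
so that every surviving cumulant has two arguments at the bounded positions $p,q\in\{1,\dots,m+1\}$ and two at the positions $h+a,h+b$ growing with $h$. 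By the symmetry of joint cumulants in their arguments and the translation invariance afforded by joint stationarity, each such cumulant matches the configuration controlled by the second hypothesis and therefore tends to $0$; summing the finitely many fixed-weight terms gives the claim. The DCCA fourth-order trace $\trace\big(K_{m+1}^\otimes(h)\K_{1,2}(h)\big)$ is handled identically, the surviving entries being $\ka_{1,2}(q,h+b,p,h+a)$.

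I expect the fourth-order terms to be the main obstacle. The bookkeeping needed to expand the block/Kronecker trace and to identify exactly which cumulant entries survive the multiplication by the sparse $K_{m+1}(h)$ is the delicate part, as is the reduction — via the symmetry and stationarity of the cumulants — of the surviving entries $\ka_{k,k}(q,h+b,p,h+a)$ to the precise form $\ka_{k_1,k_2}(p,h+\tau,p,h+q)$ assumed in the hypothesis. The second-order terms, by contrast, are routine once the support of $K_{m+1}(h)$ is exploited.
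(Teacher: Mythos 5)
Your overall strategy is the same as the paper's: expand the two covariance formulas \eqref{eq:Covdfai} and \eqref{eq:Covdccai} from Theorem \ref{thm:expec}, use the sparsity of $K_{m+1}(h)$ in \eqref{eq:Km} to localize everything to an $(m+1)\times(m+1)$ window at distance $h$, and let the two hypotheses kill the second- and fourth-order terms respectively. The second-order part of your argument is correct and matches the paper's, which records the surviving block as a matrix $H_{k_1,k_2}(\pm h)$ of covariances at lags in $\{h-m,\dots,h+m\}$.

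The gap is in the last step of the fourth-order part. Your expansion
\[
\sum_{p,q=1}^{m+1}\sum_{a,b=1}^{m+1}[K_{m+1}]_{p,q}\,[K_{m+1}]_{a,b}\,\ka_{k,k}(q,h+b,p,h+a)
\]
is the correct one, but the claim that ``symmetry of joint cumulants and translation invariance'' reduce each surviving cumulant to the hypothesized configuration $\ka_{k_1,k_2}(p,h+\tau,p,h+q)$ is false when $q\neq p$. Permuting the four arguments only reorders the multiset $\{q,p,h+a,h+b\}$, and a common shift $c$ sends the pair of bounded arguments $(q,p)$ to $(q+c,p+c)$; neither operation can make the two bounded arguments coincide unless $p=q$ to begin with. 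So the hypothesis as stated, which fixes the first and third arguments to be \emph{equal}, literally covers only the diagonal terms $q=p$ of your sum, and your proof as written does not dispose of the off-diagonal ones. What is needed is the slightly stronger condition $\ka_{k_1,k_2}(p',h+\tau,p,h+q)\to 0$ for all fixed $p',p,\tau,q$, which does hold in every example treated in the paper. It is worth noting that the paper's own proof avoids this issue only by retaining, in the block expansion of $\trace\big(K_{m+1}^\otimes(h)\K_{k_1,k_2}(h)\big)$, just the terms weighted by $[K_{m+1}]_{p,p}$, i.e.\ the $\ell=p$ part of $\sum_{\ell}[K_{m+1}]_{p,\ell}K_{m+1}(h)[\K_{k_1,k_2}(h)]^{\ell,p}$, which is exactly the subfamily matching the stated hypothesis; your computation shows that the cross terms $\ell\neq p$ are genuinely present, so either the hypothesis must be strengthened or those terms must be argued away by some additional means.
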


%{Theorem \ref{thm:convDCCA} is a very general result. Some results concerning $\nu=0$ and absolute summable covariances/cross-correlations are discussed in Appendix \ref{special}.}

\section{Specializations } \label{special}

The results presented in Theorem \ref{thm:convDCCA} are general ones, holding for jointly stationary processes with some mild assumptions. In this section we examine the consequences of Theorem \ref{thm:convDCCA} in some specific contexts, focusing in asymptotic results when both, the sample size $n$ and the window size $m$ go to infinity.

We {start by studying} the asymptotic behavior of $F_{k, DFA}^2(m)$ and $F_{DCCA}(m)$, as $m\to \infty$, but first we need to introduce some notation. Recall that the degree of the polynomial fit in \eqref{eq:XPQ} is given by $\nu+1$. Let $M_j$ and  $M_j^\ast$, for all $0 \leq j \leq m$, be two matrices of size $(m+1)\times(m+1)$ for which the $(r,s)$th coefficients are given respectively by
\begin{equation}\label{eq:matrixMast}
[M_j]_{r,s} = \left\{
\begin{array}{cc}
1, & \mbox{if} \quad |r-s| = j,\\
0, & \mbox{otherwise},
\end{array}
\right. \qquad
[M_j^\ast]_{r,s} = \left\{
\begin{array}{cc}
1, & \mbox{if} \quad s-r = j,\\
0, & \mbox{otherwise}.
\end{array}
\right.
\end{equation}
Observe that the matrices  $\Gamma_{k}$ and $\Gamma_{1,2}$ defined respectively  by  \eqref{eq:Gammak} and   \eqref{eq:Gamma12}, can be rewritten as
\begin{equation}%\label{eq:gammas}
\Gamma_k = \sum_{h = 0}^{m}\gamma_k(h)M_h, \qquad\Gamma_{1,2} = \gamma_{1,2}(0)M_0^\ast + \sum_{h = 1}^{m}\gamma_{1,2}(h)M_h^\ast + \sum_{h = 1}^{m}\gamma_{1,2}(-h)\big[M_h^\ast\big]^\top .\nonumber
\end{equation}
Let
\begin{equation}\label{eq:alpha}
 \alpha_0^{(m)} := \trace \big(K_{m+1}\big), \quad  \alpha_j^{(m)} :=\trace (K_{m+1}M_j^\ast),\quad \beta_j^{(m)} := \alpha_j^{(m)}/\alpha_0^{(m)}, \quad 1\leq j\leq m,
\end{equation}
for $K_{m+1}$ defined by \eqref{eq:Km}, so that, for any $\nu\geq 0$ one can write
\begin{align}
  \trace(K_{m+1}\Gamma_k) &% = \alpha_0^{(m)}\gamma_k(0) + 2\sum_{h = 1}^{m-1}\alpha_h^{(m)}\gamma_k(h)
                            = \alpha_0^{(m)}\bigg\{\gamma_k(0) + 2\sum_{h = 1}^{m-1}\beta_h^{(m)}\gamma_k(h) \bigg\},\label{eq:gammask}\\
  \trace(K_{m+1}\Gamma_{1,2}) %& = \alpha_0^{(m)}\gamma_{1,2}(0) + \sum_{h = 1}^{m-1}\alpha_h^{(m)}\big[\gamma_{1,2}(-h) + \gamma_{1,2}(h)\big] \\
                        &= \alpha_0^{(m)}\bigg\{\gamma_{1,2}(0) + \sum_{h = 1}^{m-1}\beta_h^{(m)}\Big[\gamma_{1,2}(-h) + \gamma_{1,2}(h)\Big]\bigg\}.\label{eq:gammas12}
\end{align}
These equations yield Lemma \ref{lem:alpham}. {In what follows, for two sequences of real numbers $\{a_n\}_{n=1}^\infty$ and $\{b_n\}_{n=1}^\infty$, we write $a_n\sim b_n$ if $a_n/b_n\to 1$, as $n\to\infty$.}

\begin{lemma}\label{lem:alpham}
Let  $\nu+1$ be the degree of the polynomial fit in \eqref{eq:XPQ} and $\alpha_j^{(m)}$, $0\leq j\leq m$, be defined by \eqref{eq:alpha}. Also, let $K_{m+1}$  and $M_j$ be the matrices defined by \eqref{eq:Km} and \eqref{eq:matrixMast}, respectively. Then
\begin{equation*}
  \trace (K_{m+1}M_0) = \alpha_0^{(m)}, \quad    \trace (K_{m+1}M_m) = 0, \quad \trace (K_{m+1}M_j) = 2\alpha_j^{(m)}, \quad 1 \leq j <m.
\end{equation*}
In particular, if $\nu=0$, then $ \alpha_j^{(m)}\sim \alpha_0^{(m)}$, as $m\to \infty$.
\end{lemma}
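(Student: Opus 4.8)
The plan is to treat the three exact identities separately from the asymptotic statement, since the former are purely algebraic while the latter requires a genuine computation. The two basic facts behind the identities are that $M_0 = M_0^\ast = I_{m+1}$ and that, for $1 \le j \le m$, the sparsity pattern of $M_j$ splits as $M_j = M_j^\ast + (M_j^\ast)'$, with $M_j^\ast$ supported strictly above and $(M_j^\ast)'$ strictly below the main diagonal. The identity $\trace(K_{m+1}M_0)=\alpha_0^{(m)}$ is then immediate. For $1\le j\le m$ I would use that $K_{m+1}=J_{m+1}'Q_{m+1}J_{m+1}$ is symmetric (because $Q_{m+1}$ is), so that $\trace\bigl(K_{m+1}(M_j^\ast)'\bigr) = \trace(K_{m+1}M_j^\ast)=\alpha_j^{(m)}$: the first trace sums the $j$-th superdiagonal of $K_{m+1}$ and the second its $j$-th subdiagonal, and these coincide by symmetry. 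Adding the two gives $\trace(K_{m+1}M_j)=2\alpha_j^{(m)}$, which is the third identity for $1\le j<m$.

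The case $j=m$ (the second identity) reduces to showing $\alpha_m^{(m)}=0$. Writing $g_r := J_{m+1}e_r$ for the $r$-th column of $J_{m+1}$, one has $g_1 = 1_{m+1}$ and $g_{m+1}=e_{m+1}$, so that $\alpha_m^{(m)} = [K_{m+1}]_{m+1,1} = g_{m+1}'Q_{m+1}g_1 = e_{m+1}'Q_{m+1}1_{m+1}$. Since the first column of $D_{m+1}$ is $1_{m+1}$, the constant vector lies in the range of $P_{m+1}$, hence $Q_{m+1}1_{m+1}=0_{m+1}$ and $\alpha_m^{(m)}=0$. Combined with the previous paragraph this gives $\trace(K_{m+1}M_m)=2\alpha_m^{(m)}=0$.

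For the asymptotic claim I specialize to $\nu=0$ (a linear fit) and decompose $K_{m+1}=J_{m+1}'J_{m+1}-J_{m+1}'P_{m+1}J_{m+1}$. With $N:=m+1$ one computes $[J_{m+1}'J_{m+1}]_{r,s}=N+1-\max(r,s)$, giving the exact value $\trace(J_{m+1}'J_{m+1}M_j^\ast)=\tfrac12(N-j)(N-j+1)=\tfrac12 N^2\bigl(1+O(1/N)\bigr)$ for fixed $j$. For the projection term I would write $\trace(J_{m+1}'P_{m+1}J_{m+1}M_j^\ast)=\trace(P_{m+1}B_j)$ with $B_j:=J_{m+1}M_j^\ast J_{m+1}'$, whose entries form the shifted partial-sum kernel $[B_j]_{a,b}=\min(a,b-j)$ for $b>j$ (and $0$ otherwise), and then evaluate $\trace(P_{m+1}B_j)=\|1_{m+1}\|^{-2}\,1_{m+1}'B_j1_{m+1}+\|\tilde v\|^{-2}\,\tilde v'B_j\tilde v$ using the orthogonal basis $\{1_{m+1},\tilde v\}$ of the range of $P_{m+1}$, where $\tilde v$ is the centered vector of abscissae $(1,\dots,m+1)'$. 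Each term is $\Theta(N^2)$ with a leading coefficient that, for fixed $j$, does not depend on $j$ (the shift perturbs the underlying Riemann sums only by $O(jN)$ after normalization). Leading-order bookkeeping yields $\alpha_j^{(m)}=\bigl(\tfrac12-\tfrac13-\tfrac1{10}\bigr)N^2+o(N^2)=\tfrac1{15}N^2+o(N^2)$, independent of the fixed shift $j$; in particular $\alpha_0^{(m)}\sim N^2/15>0$, so $\beta_j^{(m)}=\alpha_j^{(m)}/\alpha_0^{(m)}\to1$, i.e.\ $\alpha_j^{(m)}\sim\alpha_0^{(m)}$.

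The main obstacle is this last step, and unlike the $M$-versus-$M^\ast$ bookkeeping it cannot proceed by discarding the projection term: both $\trace(J_{m+1}'J_{m+1}M_j^\ast)$ and $\trace(J_{m+1}'P_{m+1}J_{m+1}M_j^\ast)$ are of order $N^2$, so the conclusion hinges on the two $N^2$-coefficients — $\tfrac12$ from the first trace and $\tfrac13+\tfrac1{10}$ from the projections onto the constant and onto the centered linear trend, respectively — failing to cancel, leaving the nonzero remainder $\tfrac1{15}$. The care needed is twofold: confirming that the fixed shift $j$ affects these coefficients only at lower order, and computing the two projection contributions precisely enough — as polynomials in $N$, or via the explicit integrals $\int_0^1\!\int_0^1\min(u,w)\,du\,dw=\tfrac13$ and $12\int_0^1\!\int_0^1(u-\tfrac12)(w-\tfrac12)\min(u,w)\,du\,dw=\tfrac1{10}$ — to pin down the surviving constant $\tfrac1{15}$.
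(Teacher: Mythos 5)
Your three identities are proved essentially as in the paper: both arguments rest on $M_0=I_{m+1}$, the splitting $M_j=M_j^\ast+(M_j^\ast)'$ together with the symmetry of $K_{m+1}$, and the fact that $Q_{m+1}1_{m+1}=0_{m+1}$ kills the first row and column of $K_{m+1}$ (the paper states this for the whole first row/column; you only need the corner entry $[K_{m+1}]_{m+1,1}=e_{m+1}'Q_{m+1}1_{m+1}$, which is the same observation). Where you genuinely diverge is the asymptotic claim for $\nu=0$. The paper computes the entries $[K_{m+1}]_{r,s}=m+2-\max\{r,s\}-\tfrac{(m+2-r)(m+2-s)[m^2+2m+3(r-1)(s-1)]}{m(m+1)(m+2)}$ in closed form and sums them exactly, obtaining $\alpha_0^{(m)}=\tfrac{m^2+2m-3}{15}$ and an exact polynomial expression for $\alpha_j^{(m)}$ whose ratio to $\alpha_0^{(m)}$ visibly tends to $1$ for fixed $j$. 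You instead write $K_{m+1}=J_{m+1}'J_{m+1}-J_{m+1}'P_{m+1}J_{m+1}$, evaluate $\trace(J_{m+1}'J_{m+1}M_j^\ast)=\tfrac12(N-j)(N-j+1)$ exactly, and handle the projection term through the rank-two spectral decomposition of $P_{m+1}$ and Riemann-sum limits, landing on $\alpha_j^{(m)}\sim(\tfrac12-\tfrac13-\tfrac1{10})N^2=N^2/15$; I checked the two integrals ($\int_0^1\!\int_0^1\min(u,w)\,du\,dw=\tfrac13$ and $12\int_0^1\!\int_0^1(u-\tfrac12)(w-\tfrac12)\min(u,w)\,du\,dw=\tfrac1{10}$) and the leading constant agrees with the paper's $\alpha_0^{(m)}\sim m^2/15$. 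Your route is conceptually cleaner and makes transparent why the answer is a difference of three order-$N^2$ contributions that does not cancel, but it delivers only the leading order for fixed $j$; the paper's brute-force route buys exact polynomial formulas that are reused downstream (the explicit $\alpha_0^{(m)}$ supplies the constant $m/15$ in Theorem~\ref{thm:asymp}, and the exact $\alpha_j^{(m)}$ underlies the sign analysis of $\beta_j^{(m)}$ and the threshold $j_0(m)$ quoted after the lemma). For the lemma as stated your argument suffices, provided you make the $O(jN)$ error control in the Riemann-sum step explicit.
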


From the proof of Lemma \ref{lem:alpham} {(presented in the Appendix)}, when $\nu=0$ one concludes that
\[
0 < \beta_j^{(m)} \leq 1, \quad  \mbox{if} \quad  0\leq j <  j_0(m), \aand -1 \leq \beta_j^{(m)} \leq 0, \quad  \mbox{if} \quad
j_0(m) \leq j < m,
\]
where $j_0(m) = \big[\sqrt{105 m^2 + 210 m + 9} - 9(m +1)\big]/6$.  Also,  from Lemma \ref{lem:alpham},  for all $0 \leq j <m$,   $\beta_j^{(m)} \to 1$, as $m \to \infty$.
In Theorem \ref{thm:asymp},   \eqref{eq:gammask}, \eqref{eq:gammas12} and Lemma \ref{lem:alpham} are combined to obtain the asymptotic behavior of $\E\big(f^2_{k,DFA}(m,1)\big)$ and  $\E\big(f_{DCCA}(m,1)\big)$, as $m\to \infty$. Further notice that, under the hypothesis of Theorem \ref{thm:convDCCA},  the expressions derived also provide the asymptotic behavior of $F_{DCCA}(m)$, $F_{k,DFA}^2(m)$ and $\rho_{DCCA}(m)$:

\begin{thm}\label{thm:asymp}
Let $\{X_{1,t}\}_{t\in\Z}$ and $\{X_{2,t}\}_{t\in\Z}$ be two jointly  strictly stationary stochastic processes with $\E(|X_{k,t}|^4) < \infty$,  autocovariance $\gamma_{k}(\cdot)$, $k=1,2$,  and cross-covariance $\gamma_{1,2}(\cdot)$.  If  $\nu=0$,  $\sum_{h\in \Z}|\gamma_{k}(h)| < \infty$ and $\sum_{h\in \Z}|\gamma_{1,2}(h)| < \infty$, then
\begin{equation}\label{eq:exps}
\E\big(f^2_{k,DFA}(m,1)\big) \sim \frac{m}{15} \sum_{h\in\Z}\gamma_k(h), \qquad
\E\big(f_{DCCA}(m,1)\big) \sim \frac{m}{15} \sum_{h\in\Z}\gamma_{1,2}(h), \as m\to \infty.
\end{equation}
\end{thm}

\noindent From Theorem \ref{thm:asymp} we conclude that, under the conditions of Theorem \ref{thm:convDCCA} {and  $\nu=0$},
\begin{equation}\label{eq:rhoasymp}
\rho_{DCCA}(m)  \  \overset{P} \To  \  \frac{\sum\limits_{h\in \Z}\gamma_{1,2}(h) }{\sqrt{\sum\limits_{h\in \Z}\gamma_1(h)} \sqrt{\sum\limits_{h\in \Z}\gamma_2(h)} }, \as m,n \to \infty.
\end{equation}

{Fig. \ref{f:asym}  presents the behavior of $F_{DCCA}(m)$, $F_{k,DFA}^2(m)$ and $\rho_{DCCA}(m)$ calculated from a sample of size 50 obtained from a jointly stationary process. The plots display the results for overlapping (red) and non-overlapping (blue) windows, as well as the asymptotic relation presented in Theorem \ref{thm:asymp} (green).
}

\begin{figure}[h]
\centering
\includegraphics[width=\textwidth]{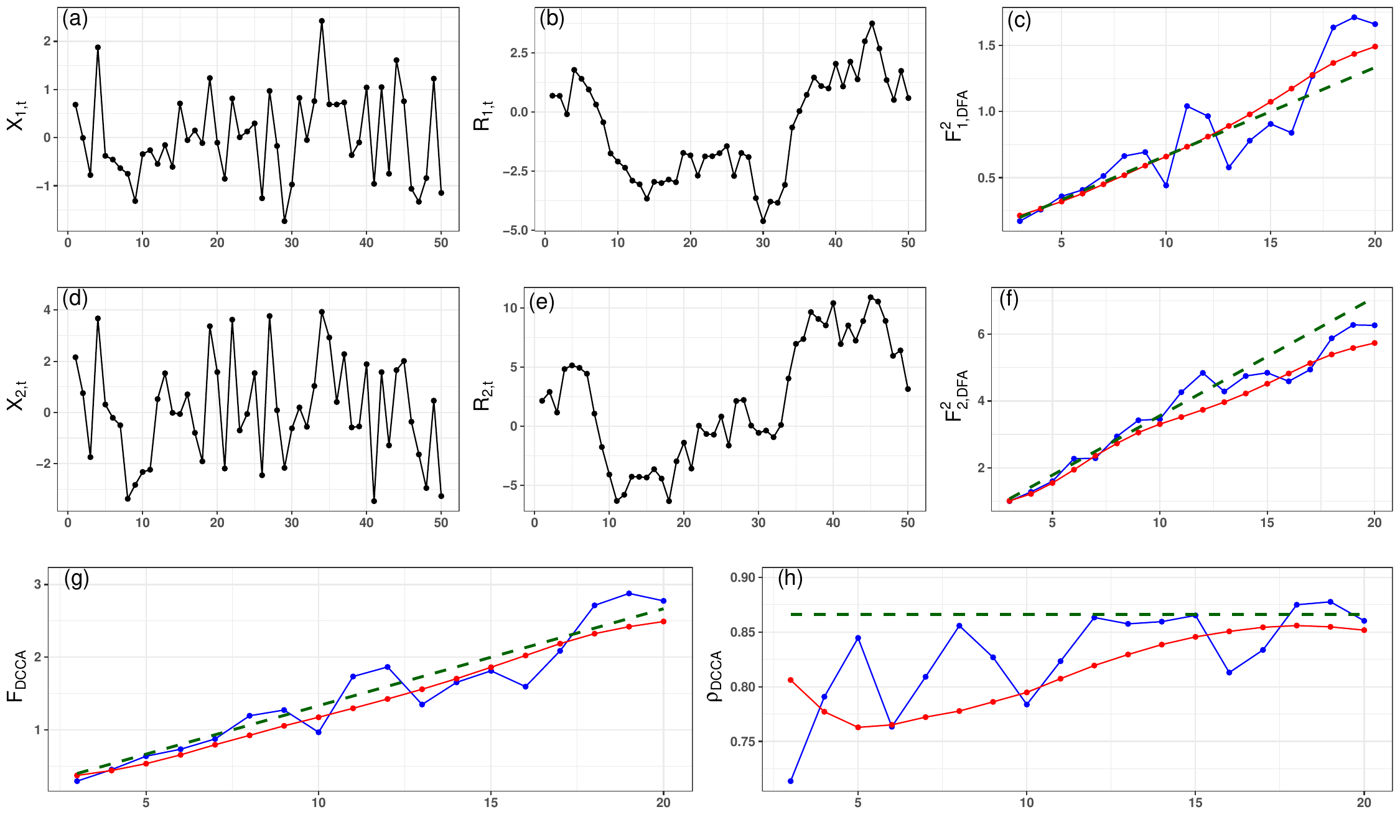}
\caption{
Consider jointly stationary processes $(X_{1,t}, X_{2,t})$ obtained as follows: $X_{2,t}= 2X_{1,t}+\eps_t$, where $X_{1,t}$ is i.i.d $\mathcal N(0,1)$ and $\eps_t$ is i.i.d $\mathcal U(-2,2)$. Plots of $X_{1,t}$ and $X_{2,t}$ are presented in (a) and (d); (b) and (e) present the respective associate integrated signal; (c) and (f) present the associated $F_{k,DFA}^2(m)$ as a function of $m$. $F_{DCCA}$ and $\rho_{DCCA}$ are presented in (g) and (h).
In each plot, the blue line represents the respective quantities calculated using non-overlapping windows, while the same quantities calculated using overlapping windows are presented in red.
The green line is the appropriate right hand side of \eqref{eq:exps} or \eqref{eq:rhoasymp}.
}\label{f:asym}

\end{figure}

Wold's decomposition theorem states that any zero mean nondeterministic weakly stationary  process $\{X_t\}_{t\in\Z}$ can be {uniquely} decomposed as $X_t = \sum_{j = 0}^\infty \psi_j u_{t-j} + d_t$, where  $\sum_{j = 0}^\infty \psi_j^2 < \infty$, $\{u_t\}_{t\in \Z}$ is  a white noise process and $\{d_t\}_{t\in \Z}$ is a deterministic process. Inspired by this result,  Corollary \ref{coro:linear} provides {the} limit in probability for the  $\rho_{DCCA}$ in the context of linear sequences with absolutely summable coefficients, described in the sequel.

Let  $\{X_{1,t}\}_{t\in\Z}$  and $\{X_{2,t}\}_{t\in\Z}$ be  two jointly stationary processes satisfying
\begin{equation}\label{eq:linear}
X_{k,t} = \sum_{j\in\Z} \psi_{k,j} \eps_{k,t-j}, \quad t \in \Z, \quad \mbox{with} \quad \sum_{j\in\Z} |\psi_{k,j}| < \infty, \quad k\in\{1,2\}.
\end{equation}
where  $\{\eps_{k,t}\}_{t\in \Z}$, $k\in\{1,2\}$, are white noise processes with zero mean,  $\var(\eps_{k,t}) = \tau_k^2$, $k\in\{1,2\}$, and $\cov(\eps_{1,r}, \eps_{2,s}) = \tau_{1,2}$, if $r=s$, and zero otherwise.
Structure  \eqref{eq:linear} can be used to describe a wide variety of scenarios. For instance,
\begin{enumerate}[label =  \arabic*), wide = 0.5em, itemsep = 0em, nosep, leftmargin = 2em]
\item if   $\tau_{1,2} = 0$, then $\{X_{k,t}\}_{t\in \Z}$, $k\in\{1,2\}$,  are stationary uncorrelated processes;

\item if  $\psi_{k,j} = 0$, for  all $j \neq  0$, and  $\tau_{1,2} \neq 0$,  then  $\{(X_{1,t}, X_{2,t})\}_{t\in \Z}$ is a  bivariate white noise;

\item if  $\psi_{k,j} = 0$, for  all $j < 0$ and $j > j_0$, for some $j_0 \in \N$,  $\{X_{k,t}\}_{t\in \Z}$ for $k\in\{1,2\}$ are  moving average  processes;

 \item if $\psi_{k,j} = 0$, for  all $j < 0$, then  $\{X_{k,t}\}_{t\in \Z}$ for $k\in\{1,2\}$ are causal processes.
\end{enumerate}

\noindent Observe that
\begin{equation}\label{eq:gammasX}
    \gamma_{k}(h) = \tau_k^2\sum_{j\in\Z}\psi_{k,j}\psi_{k, j+h}, \quad
   \gamma_{1,2}(h) = \tau_{1,2}\sum_{j\in\Z} \psi_{1,j}\psi_{2,j+h}, \quad \mbox{for all} \quad h \in Z,
\end{equation}
  so that $\gamma_{k}(h)  \to 0$ and $ \gamma_{1,2}(h)  \to 0$, as $|h| \to \infty$. If, in particular, $\{(\eps_{1,t}, \eps_{2,t})\}_{t\in \Z}$ is such that $\eps_{k_1,t} \ind \eps_{k_2,s}$, $t \neq s$ and $k_1, k_2 \in\{ 1,2\}$, (that is, the random variables are independent for $t \neq s$), with finite joint fourth moment, then
\begin{align*}
\cov(X_{k_1,p}X_{k_2,p}, X_{k_1,\tau + h}X_{k_2,q + h}) =  \var(\eps_{k_1,0}\eps_{k_2,0})\sum_{j \in \Z}\psi_{k_1,j}\psi_{k_1,j-p+\tau+h}\psi_{k_2,j}\psi_{k_2,j-p+q+h} \To 0,
\end{align*}
as $ h \to \infty$, for $k_1,k_2 \in\{ 1,2\}$, which is equivalent to
$\ka_{k_1,k_2}(p,\tau+h,p,q+h) \to 0$, as  $h \to \infty$. Moreover, if $\{(\eps_{1,t}, \eps_{2,t})\}_{t\in \Z}$ is a bivariate Gaussian process then
\[
\ka_{k_1,k_2}(p,\tau+h,p,q+h) = 0, \quad \forall \, p, h, \tau, q \in \Z.
\]

\begin{coro}\label{coro:linear}
Let  $\{X_{1,t}\}_{t\in\Z}$  and $\{X_{2,t}\}_{t\in\Z}$ be  two jointly stationary processes satisfying \eqref{eq:linear} and let $\nu=0$. Then, under the conditions of Theorem \ref{thm:convDCCA},
\[
\rho_{DCCA}(m)\overset{P}{\To}  \sign(\Psi_{1,2}) \frac{\tau_{1,2}}{\tau_1\tau_2} = \sign(\Psi_{1,2})\rho_{1,2}^\varepsilon,
\quad \as n,m\to \infty,
  \]
where $\Psi_{1,2} = \sum_{j\in \Z} \psi_{1,j}\sum_{\ell\in\Z}\psi_{2,\ell}$ and  $\rho_{1,2}^\varepsilon = \corr(\eps_{1,t}, \eps_{2,t})$.
\end{coro}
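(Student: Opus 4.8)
The plan is to apply the limit in \eqref{eq:rhoasymp} supplied by Theorem \ref{thm:asymp} and evaluate the three series $\sum_{h\in\Z}\gamma_{1,2}(h)$, $\sum_{h\in\Z}\gamma_1(h)$ and $\sum_{h\in\Z}\gamma_2(h)$ explicitly under the linear structure \eqref{eq:linear}. First I would check that the hypotheses of Theorem \ref{thm:asymp} are met: since $\sum_{j\in\Z}|\psi_{k,j}| < \infty$, the autocovariances and cross-covariance given in \eqref{eq:gammasX} are absolutely summable (each is a convolution of two absolutely summable sequences, so $\sum_h|\gamma_k(h)| \leq \tau_k^2\big(\sum_j|\psi_{k,j}|\big)^2 < \infty$, and similarly for $\gamma_{1,2}$), and we are given $\nu = 0$ together with the conditions of Theorem \ref{thm:convDCCA}. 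Hence \eqref{eq:rhoasymp} holds.

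Next I would compute the series. Using \eqref{eq:gammasX} and interchanging the order of summation (justified by absolute summability and Fubini),
\[
\sum_{h\in\Z}\gamma_k(h) = \tau_k^2\sum_{h\in\Z}\sum_{j\in\Z}\psi_{k,j}\psi_{k,j+h} = \tau_k^2\Big(\sum_{j\in\Z}\psi_{k,j}\Big)\Big(\sum_{\ell\in\Z}\psi_{k,\ell}\Big) = \tau_k^2\Big(\sum_{j\in\Z}\psi_{k,j}\Big)^2,
\]
where the middle equality follows from reindexing $\ell = j+h$ and summing over $j$ and $\ell$ independently. Writing $\Psi_k := \sum_{j\in\Z}\psi_{k,j}$, this gives $\sum_h\gamma_k(h) = \tau_k^2\Psi_k^2$. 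The identical manipulation applied to the cross-covariance yields
\[
\sum_{h\in\Z}\gamma_{1,2}(h) = \tau_{1,2}\Big(\sum_{j\in\Z}\psi_{1,j}\Big)\Big(\sum_{\ell\in\Z}\psi_{2,\ell}\Big) = \tau_{1,2}\,\Psi_1\Psi_2 = \tau_{1,2}\,\Psi_{1,2},
\]
recognizing $\Psi_{1,2} = \Psi_1\Psi_2$ as defined in the statement.

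Finally I would substitute these into \eqref{eq:rhoasymp}. The denominator becomes $\sqrt{\tau_1^2\Psi_1^2}\sqrt{\tau_2^2\Psi_2^2} = \tau_1\tau_2\,|\Psi_1\Psi_2| = \tau_1\tau_2\,|\Psi_{1,2}|$ (using $\tau_k > 0$), so the ratio simplifies to
\[
\frac{\tau_{1,2}\,\Psi_{1,2}}{\tau_1\tau_2\,|\Psi_{1,2}|} = \frac{\Psi_{1,2}}{|\Psi_{1,2}|}\cdot\frac{\tau_{1,2}}{\tau_1\tau_2} = \sign(\Psi_{1,2})\frac{\tau_{1,2}}{\tau_1\tau_2},
\]
and since $\rho_{1,2}^\varepsilon = \corr(\eps_{1,t},\eps_{2,t}) = \tau_{1,2}/(\tau_1\tau_2)$ by the definitions $\var(\eps_{k,t}) = \tau_k^2$ and $\cov(\eps_{1,t},\eps_{2,t}) = \tau_{1,2}$, this is the claimed limit. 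The only genuine subtlety is the appearance of the absolute value $|\Psi_{1,2}|$ from the square roots in the denominator, which is precisely what produces the $\sign(\Psi_{1,2})$ factor and is the reason the limit recovers the white-noise correlation only up to direction; I would flag the degenerate case $\Psi_{1,2} = 0$ (where the expression is undefined) as implicitly excluded. Beyond that, the argument is a direct computation with no real obstacle, the main point being the careful reindexing of the double sums.
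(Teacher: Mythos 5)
Your proposal is correct and follows essentially the same route as the paper: compute $\sum_h\gamma_k(h)=\tau_k^2\big(\sum_j\psi_{k,j}\big)^2$ and $\sum_h\gamma_{1,2}(h)=\tau_{1,2}\Psi_{1,2}$ from \eqref{eq:gammasX} via Fubini, then substitute into the limit given by Theorem \ref{thm:asymp} (equation \eqref{eq:rhoasymp}), the square roots in the denominator producing the $\sign(\Psi_{1,2})$ factor. Your explicit remarks on absolute summability and on the degenerate case $\Psi_{1,2}=0$ are slightly more careful than the paper's own one-line justification, but the argument is the same.
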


\subsection{Effect of $\nu$ on Theorem \ref{thm:convDCCA} }

In view of Proposition \ref{inv}, the results in Lemma \ref{lem:alpham}, Theorem \ref{thm:asymp}, and Corollary \ref{coro:linear} also hold for stationary process with linear trends, without any modification. Similar results can, in principle, be obtained for polynomial trends of higher degrees as long as one {can} carry on the delicate analysis in Lemma \ref{lem:alpham}. It entails a careful investigation of the general form of $Q_{m+1}$ in \eqref{eq:XPQ}, which depends on obtaining $(D_{m+1}^\top D_{m+1})^{-1}$ in closed form. {Even for $\nu=1$, this task is very complicated. Also, the analysis is $\nu$ by $\nu$ dependent in the sense that no general formula can be obtained.}

To visualize the effect of $\nu>0$ on the asymptotic behavior of the $F^2_{k,DFA}(m)$, $F_{DCCA}(m)$ and $\rho_{DCCA}(m)$, we perform a Monte Carlo Simulation study considering $\nu\in\{0,2,5\}$ and $1{,}000\leq m\leq1{,}100$ with increments of size 10. The data generating process in this example is
\[X_{1,t} = 0.6X_{1,t-1}+\eps_t, \quad \eps_t\simiid \mathcal N(0,1), \quad X_{2,t} = 0.6\eta_{t-1}+\eta_t, \quad \eta_t\simiid \mathcal N(0,1),\]
with $\eps_t$ and $\eta_t$ independent of each other. We perform 1,000 replications and the sample size was set to 2,000 in each case. The results are shown in Fig. \ref{f:nu025}.  The difference between the $F^2_{k,DFA}(m)$, $F_{DCCA}(m)$ and $\rho_{DCCA}(m)$ for $\nu=2$ and $\nu=5$ is shown in the rightmost column. From the plots, the similarity between the plots is remarkable. Just as in the case of $\nu=0$, the $F^2_{k,DFA}(m)$, $F_{DCCA}(m)$ and $\rho_{DCCA}(m)$ behaves as a linear function of $m$ for $\nu\in\{2,5\}$.

Observe that the degree of the local polynomial fitted on each window of the integrated noise is, by definition, $\nu+1$. It is a well-known fact from regression analysis that, increasing the degree of a polynomial fit tends to present diminishing returns on the quality of the approximation. So, it is expected that the values of $F^2_{k,DFA}(m)$, $F_{DCCA}(m)$ and $\rho_{DCCA}(m)$ tend to stabilize as $\nu$ increases.

\begin{figure}[h]
\centering
\includegraphics[width=\textwidth]{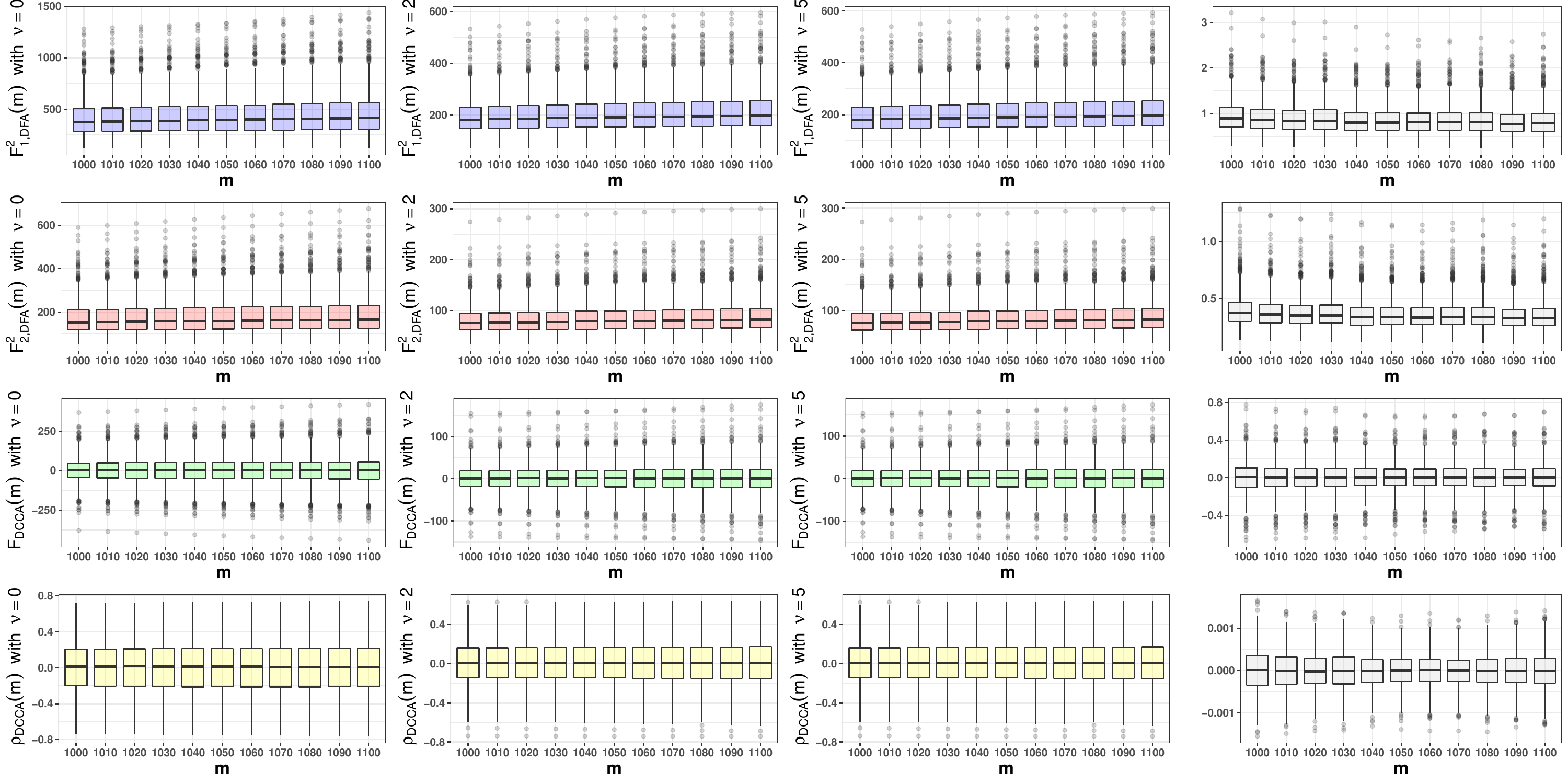}
\caption{Boxplots of the $F^2_{k,DFA}(m)$, $F_{DCCA}(m)$ and $\rho_{DCCA}(m)$ for $\nu\in\{0,2,5\}$. The rightmost panel shows the boxplots of the differences for $\nu=2$ and $\nu=5$ in each case.
}\label{f:nu025}
\end{figure}

\section*{Acknowledgments}
We thank the Editor, Associate Editor and two anonymous referees for comments and suggestions. T.S. Prass gratefully acknowledges the support of FAPERGS (ARD 01/2017, Processo 17/2551-0000826-0).

\section*{References}
\bibliographystyle{myjmva}
\bibliography{ref_dcca}

\section*{Appendix: Proof of Theorems}

\begin{proof}[\bf {Proof of Proposition \ref{inv}}]
Let $h_{\ell+1}(t) := \sum_{j = 1}^t j^\ell$, $\ell \in \N$. Observe that $h_{\ell+1}(t)$ is the generalized harmonic number and hence, for each $\ell \in \N$,  $h_{\ell}(t)$ is a polynomial with order $\ell+1$ given by
\[
h_{\ell+1}(t) = \frac{ b_{\ell+1}(-1)^\ell}{\ell+1} + \frac{1}{\ell+1}B_{\ell+1}(t+1),
\]
where $b_\ell$ are the Bernoulli numbers and $B_\ell(\cdot)$ are the Bernoulli polynomials defined, respectively, through
\[
b_{\ell} = \sum _{\tau=0}^{\ell}\frac {1}{\tau+1}\sum _{v=0}^{\tau}{\tau \choose v}(-1)^{v}(v+1)^{\ell},  \qquad B_{\ell}(t) := \sum _{\tau=0}^{\ell}{\binom {\ell}{\tau}}b_{\tau}t^{\ell-\tau}.
\]
Observe further that $p_{k}(j) = \sum_{\ell = 0}^{\nu_k} a_{k,\ell} j^\ell$ so that we can write
\[
R_{k,t}=\sum_{j=1}^tX_{k,j}+\sum_{j=1}^tp_{k}(j) = \sum_{j=1}^tX_{k,j} + \sum_{\ell = 0}^{\nu_k} a_{k,\ell} h_{\ell+1}(t) = \sum_{j=1}^tX_{k,j} + p_{k}^\ast(t).
\]
where $p_{k}^\ast(t)$ is a polynomial of order $\nu_k+1$. Upon denoting $T_{k,j}=p_{k}(j)$ and $T_{k,j}^\ast=p_{k}^\ast(j)$, we have
$\bs{Y}_{k,n}^{\lr{1}}=\bs{X}_{k,n}^{\lr{1}}+\bs T_{k,n}^{\lr{1}}$,
\begin{equation*}
\bs{R}_{k,n}^{\lr{1}} = J_n\Big[\bs{X}_{k,n}^{\lr{1}}+\bs T_{k,n}^{\lr{1}}\Big] = J_n\bs{X}_{k,n}^{\lr{1}}  + \bs{T}_{k,n}^{\ast\lr{1}},
 \qquad \bs{R}_{k,m+i}^{\lr{i}}  =  J_{m+i}^{\lr{i}}\bs{X}_{k,m+i}^{\lr{1}}+ \bs{T}_{k,m+i}^{\ast\lr{i}},
\end{equation*}
$i \in\{ 1,\dots,n-m\}$. Now, with $D_{m+1}$, $P_{m+1}$ and $Q_{m+1}$ as in \eqref{eq:XPQ}, let $\bs{\Eps}_{k,i}^X$ and $\bs{\Eps}_{k,i}^Y$ denote the detrended walk based on $\{X_{k,t}\}_{t\in\Z}$ and $\{Y_{k,t}\}_{t\in\Z}$, respectively.
Since $P_{m+1}\bs{T}_{k,m+i}^{\ast\lr{i}}=\bs{T}_{k,m+i}^{\ast\lr{i}}$, it follows that
\[
\bs{\Eps}_{k,i}^Y=Q_{m+1}\bs{R}_{k,m+i}^{\lr{i}}=Q_{m+1}\Big[J_{m+i}^{\lr{i}}\bs{X}_{k,m+i}^{\lr{1}}+\bs T_{k,m+i}^{\ast\lr{i}}\Big] =Q_{m+1}J_{m+i}^{\lr{i}}\bs{X}_{k,m+i}^{\lr{1}}=\bs{\Eps}_{k,i}^X,
\]
and the result follows from \eqref{eq:dfak_i} and \eqref{eq:dcca_i}.
\end{proof}

\begin{proof}[\bf {Proof of Lemma \ref{lem:eps}}]
  Assume $k \in\{1,2\}$ and $i \in\{ 1,\ldots,n-m\}$.  Let $\mX_{k,i}$ be the vector defined as
\begin{equation}\label{eq:mX}
\mX_{k,i} := \bs{R}_{k,m+i}^{\lr{i}}  - R_{k,i}1_{m+1} = \bigg(0, X_{k,i+1}, \ldots, \sum_{t=i+1}^{m+i}X_{k,t}\bigg)^\top
\end{equation}
and $\mathcal{M}:=\{1, \ldots, n-m\}$. For any $1 \leq \tau \leq n-m$, $h\geq 0$ and $\bs{i} = (i_1, \ldots, i_\tau) \in \mathcal{M}^\tau$, let
\[
\bs{i}+h := (i_1+h, \ldots, i_\tau+h), \qquad \mQ_{k,\bs{i}} := \big(\mX_{k,i_1}^\top Q_{m+1},\ldots, \mX_{k,i_\tau}^\top Q_{m+1}\big),
\]
 with $Q_{m+1}$ be given by \eqref{eq:XPQ}. Hence, from the joint stationarity of $\{X_{1,t}\}_{t\in\Z}$ and $\{X_{2,t}\}_{t\in\Z}$,
\begin{equation}\label{eq:g12}
(\mQ_{1,\bs{i}}, \mQ_{2,\bs{j}})   \overset{d}{=} (\mQ_{1,\bs{i}+h}, \mQ_{2,\bs{j}+h}), \quad  \tau, \ell \in \mathcal{M}, \quad \bs{i} \in \mathcal{M}^\tau,  \quad \bs{j}\in \mathcal{M}^\ell,
\end{equation}
and all $h \geq 0$ for which the $(\mQ_{1,\bs{i}+h}, \mQ_{2,\bs{j}+h})$ is well defined. The result then follows by observing that, since $\E(X_{k,t})$ does not depend on $t$, from \eqref{eq:mX}, $
Q_{m+1}R_{k,i}1_{m+1} = 0_{m+1}$,  which implies $Q_{m+1}\mX_{k,i} = Q_{m+1}\bs{R}_{k,m+i}^{\lr{i}}  =  \bs{\Eps}_{k,i}$.
\end{proof}

\begin{proof}[\bf {Proof of Corollary \ref{lem:sta}}]
The result follows from \eqref{eq:dfak_i}, \eqref{eq:dcca_i} and Lemma \ref{lem:eps}.
\end{proof}

\begin{proof}[\bf {Proof of Theorem \ref{thm:expec}}]
  Assume $k,k_1,k_2 \in\{1,2\}$, $0 < m <n$, $0 < i < n-m$ and $0 \leq h, h_1, h_2 < n-m$.

\noindent The stationarity assumption for $\{X_{k,t}\}_{t\in\Z}$ and \eqref{eq:rt} imply that $Q_{m+1}\E\big(\bs{R}_{k,m+1}^{\lr{1}} \big) = 0_{m+1}$ so that
\begin{equation}\label{eq:traceCov}
Q_{m+1}\E\Big(\bs{R}_{k_1,m+1}^{\lr{1}}\big[\bs{R}_{k_2,m+1}^{\lr{1}}\big]^\top \Big) = Q_{m+1}\cov\Big(\bs{R}_{k_1,m+1}^{\lr{1}}, \bs{R}_{k_2,m+1}^{\lr{1}}\Big) = Q_{m+1}\Sigma_{k_1,k_2}.
\end{equation}
From Lemma \ref{lem:sta},  $\E(\bs{\Eps}_{k_1,i}^\top \bs{\Eps}_{k_2,i}) = \E(\bs{\Eps}_{k_1,1}^\top \bs{\Eps}_{k_2,1})$  so that, from \eqref{eq:epsk} and the properties of trace,
\begin{equation}
  \E(\bs{\Eps}_{k_1,i}^\top \bs{\Eps}_{k_2,i})
  = \E\Big( \trace \big(\big[\bs{R}_{k_1,m+1}^{\lr{1}}\big]^\top Q_{m+1}\bs{R}_{k_2,m+1}^{\lr{1}}\big)\Big) =\trace \Big(Q_{m+1}\E\big(\bs{R}_{k_1,m+1}^{\lr{1}}\big[\bs{R}_{k_2,m+1}^{\lr{1}}\big]^\top \big)\Big).
  \label{eq:Eee}
\end{equation}
From \eqref{eq:SG}, the definition of $K_{m+1}$ and the properties of the trace,  \eqref{eq:traceCov} and \eqref{eq:Eee} imply \eqref{eq:Edfai}, when  $k_1 = k_2 = k$, and  \eqref{eq:Edccai}, when $k_1 = 1$ and $k_2 = 2$.

In order to derive $\gamma_{k,DFA}$ and $\gamma_{DCCA}$, first let
\begin{align}
 \Lambda_{k_1,k_2}(h)& := \E\Big(\big[\bs{X}_{k_1,m+1}^{\lr{1}} \otimes \bs{X}_{k_1,m+1+h}^{\lr{1}}\big]\big[ \bs{X}_{k_2,m+1}^{\lr{1}}\otimes \bs{X}_{k_2,m+1+h}^{\lr{1}}\big]^\top \Big) \label{eq:Gammao},\\
 \Upsilon_{k_1,k_2}(h) & : = \E\Big(\big[\bs{R}_{k_1,m+1}^{\lr{1}}\otimes\bs{R}_{k_1,m+1+h}^{\lr{h+1}}\big]\big[ \bs{R}_{k_2,m+1}^{\lr{1}}\otimes\bs{R}_{k_2,m+1+h}^{\lr{h+1}}\big]^\top \Big), \label{eq:covR1R2o}
\end{align}
and observe that,  from \eqref{eq:RRi} and  the properties of the Kronecker product,
\[
\bs{R}_{k,m+1}^{\lr{1}}\otimes\bs{R}_{k,m+1+h}^{\lr{h+1}} =
  \Big[J_{m+1}\otimes J_{m+1+h}^{\lr{h+1}}\Big]\Big[\bs{X}_{k,m+1}^{\lr{1}}\otimes\bs{X}_{k,m+1+h}^{\lr{1}}\Big], \quad k\in\{1,2\}
\]
so that, from \eqref{eq:Gammao} and \eqref{eq:covR1R2o},
\begin{equation}\label{eq:GammaGo}
 \Upsilon_{k_1,k_2}(h)   =  \Big[J_{m+1}\otimes J_{m+1+h}^{\lr{h+1}}\Big] \Lambda_{k_1,k_2}(h)\Big[J_{m+1}\otimes J_{m+1+h}^{\lr{h+1}}\Big]^\top .
\end{equation}
 Notice that $\Lambda_{k_1,k_2}(h)$ is a $[(m+1)(m+1+h)] \times [(m+1)(m+1+h)]$  block matrix, for which the  $(r,s)$th element in the $(p,q)$th block is  $\E(X_{k_1,p} X_{k_1,r} X_{k_2,q} X_{k_2,s})$, $1\leq p,q \leq m+1$, $1\leq r,s\leq m+1+h$. Moreover, under the assumption  $\E(X_{k,t}) = 0$,
\begin{align*}
 \Big[\big[\Lambda_{k_1,k_2}(h)\big]^{p,q}\Big]_{r,s}   =  \ka_{k_1, k_2}(p,r,q,s) + \gamma_{k_1}(r-p)\gamma_{k_2}(s-q) +  \gamma_{k_1,k_2}(q-p)\gamma_{k_1,k_2}(s-r) +  \gamma_{k_1,k_2}(s-p)\gamma_{k_1,k_2}(q-r), \quad
\end{align*}
$1\leq p,q \leq m+1$, $1\leq r,s\leq m+1+h$, where  $\gamma_{k_1, k_2}$  is the cross-covariance function associated to $\{X_{k_1,t}\}_{t\in\Z}$ and $\{X_{k_2,t}\}_{t\in\Z}$,  $\gamma_k := \gamma_{k,k}$ and  $\ka_{k_1, k_2}(p,r,q,s)$ is the joint cumulant of $X_{k_1,p}, X_{k_1,r},X_{k_2,q}, X_{k_2,s}$. Furthermore, by letting  $\Gamma_{k_1,k_2}^{h_1,h_2}$, $\Gamma_k^{h_1, h_2} $ and $\K_{k_1,k_2}(h)$ be the matrices defined in \eqref{eq:SG} - \eqref{eq:Gamma12} and observing that $\big[\Gamma_{k}^{0,h}\big]^\top  = \Gamma_{k}^{h,0}$,  one can write
\[
\Lambda_{k_1,k_2}(h) = \K_{k_1,k_2}(h) + \vc\big(\Gamma_{k_1}^{h,0}\big)\vc\big(\Gamma_{k_2}^{h,0}\big)^\top  + \Gamma_{k_1,k_2}^{0,0} \otimes\Gamma_{k_1,k_2}^{h,h}  + \mathcal{G}_{k_1,k_2}(h),
\]
where $\mathcal{G}_{k_1,k_2}(h)$ is $[(m+1)(m+1+h)] \times [(m+1)(m+1+h)]$ block matrix given by
\[
\mathcal{G}_{k_1,k_2}(h) = \big[\mathcal{G}_1 \ \cdots  \ \mathcal{G}_{m+1}\big], \quad
 \mathcal{G}_{p} := \Gamma_{k_1,k_2}^{0,h} \otimes G_{p}, \quad
G_{p} := \begin{bmatrix}
\big[\Gamma_{k_1,k_2}^{h_1, h_2}]_{1,p}\\
\vdots\\
\big[\Gamma_{k_1,k_2}^{h_1, h_2}]_{m+1+h_1,p}
\end{bmatrix}, \quad 1 \leq p \leq m+1.
\]

Now,  from the properties of the Kronecker product and $Q_{m+1}$,
\begin{align*}
  (\bs{\Eps}_{k_1,1}^\top \bs{\Eps}_{k_2,1})(\bs{\Eps}_{k_1,1+h}^\top \bs{\Eps}_{k_2,1+h})
  & =   (\bs{\Eps}_{k_1,1}\otimes \bs{\Eps}_{k_1,1+h})^\top  (\bs{\Eps}_{k_2,1}\otimes\bs{\Eps}_{k_2,1+h}),\\
(\bs{\Eps}_{k,1}\otimes \bs{\Eps}_{k,1+h})  & =  \big(Q_{m+1}\otimes Q_{m+1}\big)\big(\bs{R}_{k,m+1}^{\lr{1}} \otimes \bs{R}_{k,m+1+h}^{\lr{1+h}}\big),\\
\big(Q_{m+1}\otimes Q_{m+1}\big)^\top \big(Q_{m+1}\otimes Q_{m+1}\big) & =  \big(Q_{m+1}\otimes Q_{m+1}\big),
\end{align*}
so that, by letting $\mR_{k,1,1+h} := \bs{R}_{k,m+1}^{\lr{1}} \otimes \bs{R}_{k,m+1+h}^{\lr{1+h}}$,
\begin{align}
  \E\big( [\bs{\Eps}_{k_1,1}^\top \bs{\Eps}_{k_2,1}][\bs{\Eps}_{k_1,1+h}^\top \bs{\Eps}_{k_2,1+h}]\big)
      &= \E\big( \trace(\mR_{k_1,1,1+h}^\top [Q_{m+1}\otimes Q_{m+1}]^\top [Q_{m+1}\otimes Q_{m+1}]\mR_{k_2,1,1+h})\big)\nonumber\\
  &= \trace\big([Q_{m+1}\otimes Q_{m+1}]\E(\mR_{k_1,1,1+h}\mR_{k_2,1,1+h}^\top )\big)\nonumber\\
  & =  \trace\big([Q_{m+1}\otimes Q_{m+1}]\Upsilon_{k_1,k_2} (h)\big).\label{eq:EEEE}
  \end{align}
Hence, the properties of trace, \eqref{eq:Eee}, \eqref{eq:GammaGo} and \eqref{eq:EEEE}, imply that
\[
\cov(\bs{\Eps}_{k_1,1}^\top \bs{\Eps}_{k_2,1},\bs{\Eps}_{k_1,h+1}^\top \bs{\Eps}_{k_2,h+1} ) =  \trace\big(K_{m+1}^\otimes(h)\Lambda_{k_1,k_2}(h)\big) - \big[\trace\big(K_{m+1}\Gamma_{k_1,k_2}\big)\big]^2
\]
with  $K_{m+1}$ and $K_{m+1}^\otimes(h)$ defined in \eqref{eq:Km} and \eqref{eq:Kmo}.

\noindent Finally, observe that
\begin{align*}
\trace\Big(\big[K_{m+1}\otimes K_{m+1}(h)\big]&\vc\big(\Gamma_{k_1}^{h,0}\big)\vc\big(\Gamma_{k_2}^{h,0}\big)^\top \Big) =
\trace\Big(\vc\big(\Gamma_{k_2}^{h,0}\big)^\top \big[K_{m+1}\otimes K_{m+1}(h)\big]\vc\big(\Gamma_{k_1}^{h,0}\big)\Big)\\
& = \trace\Big( \Gamma_{k_2}^{0,h}K_{m+1}(h)\Gamma_{k_1}^{h,0}K_{m+1}\Big) = \trace\Big( K_{m+1}\Gamma_{k_1}^{0,h}K_{m+1}(h)\Gamma_{k_2}^{h,0}\Big),
\end{align*}
\[
K_{m+1}(h)\Gamma_{k_1,k_2}^{h,h} = \begin{bmatrix} 0_{h,h} & 0_{h,m+1}\\
0_{m+1, h} & K_{m+1}\Gamma_{k_1,k_2}^{0,0}\\
 \end{bmatrix}\implies \trace\Big(K_{m+1}(h)\Gamma_{k_1,k_2}^{h,h}\Big) = \trace\Big(K_{m+1}\Gamma_{k_1,k_2}^{0,0}\Big)
\]
and
\begin{align*}
  \trace\big(K_{m+1}^\otimes(h) \mathcal{G}_{k_1,k_2}(h)\big)
  & = \sum_{p = 1}^{m+1} \sum_{s = 1}^{m+1+h} \Big[K_{m+1}\Gamma_{k_1,k_2}^{0,h}\Big]_{p,s}\big[ K_{m+1}(h)G_{p}\big]_s= \sum_{p = 1}^{m+1} \sum_{s = 1}^{m+1+h} \Big[K_{m+1}\Gamma_{k_1,k_2}^{0,h}\Big]_{p,s}\Big[ K_{m+1}(h)\Gamma_{k_1,k_2}^{h,0}\Big]_{s,p}\\
  & = \trace\Big(K_{m+1}\Gamma_{k_1,k_2}^{0,h} K_{m+1}(h)\Gamma_{k_1,k_2}^{h,0}\Big),
\end{align*}
so that
\begin{align}
\trace\big(K_{m+1}^\otimes(h) \Lambda_{k_1,k_2}(h)\big)& = \trace\big(K_{m+1}^\otimes(h)\K_{k_1,k_2}(h)\big) + \trace\Big( K_{m+1}\Gamma_{k_1}^{0,h}K_{m+1}(h)\Gamma_{k_2}^{h,0}\Big) + \Big[\trace\Big(K_{m+1} \Gamma_{k_1,k_2}^{0,0}\Big)\Big]^2
  + \trace\Big(K_{m+1}\Gamma_{k_1,k_2}^{0,h} K_{m+1}(h)\Gamma_{k_1,k_2}^{h,0}\Big).\label{eq:traceKL}
\end{align}
From \eqref{eq:Edfai},  \eqref{eq:traceKL} implies \eqref{eq:Vardfai} and \eqref{eq:Covdfai} when $k_1 = k_2 = k$,  and \eqref{eq:Vardccai} and \eqref{eq:Covdccai} when $k_1 = 1$ and $k_2 = 2$.
\end{proof}

Lemma \ref{lem:convP} {provides} sufficient conditions for the sample mean to converge in probability/almost surely to the process' mean and it is necessary to the proof of Theorem \ref{thm:convDCCA}.

\begin{lemma}\label{lem:convP}
  Let $\{Y_t\}_{t\in\Z}$ be a weakly stationary process and $\gamma(h) := \cov(Y_t, Y_{t+h})$, $h\in \Z$. Then
\begin{equation}\label{eq:xbar}
\gamma(h) \to 0, \as h \to \infty  \quad \implies \quad  \frac{1}{n}\sum_{i=1}^n Y_i \overset{P}{\To} \E(Y_t), \as n\to \infty,
\end{equation}
\begin{equation}\label{eq:xbaras}
\sum_{h = 1}^\infty \frac{|\gamma(h)|}{h^q} < \infty, \quad \mbox{for some }\ 0 \leq q < 1, \quad \implies \quad  \frac{1}{n}\sum_{i=1}^n Y_i\overset{a.s}{\To} \E(Y_t), \as n\to \infty.
\end{equation}
\end{lemma}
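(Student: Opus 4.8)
The plan is to handle the two implications separately, reducing each to a second-moment estimate for the sample mean. Set $S_n:=\sum_{i=1}^n Y_i$ and $\bar Y_n:=S_n/n$. Since $\{Y_t\}_{t\in\Z}$ is weakly stationary, replacing $Y_t$ by $Y_t-\E(Y_t)$ leaves $\gamma(\cdot)$ unchanged, so I may and will assume $\E(Y_t)=0$. Both parts rest on the elementary identity $\var(S_n)=\sum_{|h|<n}(n-|h|)\gamma(h)$ (group the pairs $(i,j)$ according to the lag $h=i-j$), equivalently $\var(\bar Y_n)=\tfrac1n\sum_{|h|<n}\bigl(1-\tfrac{|h|}{n}\bigr)\gamma(h)$.

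For \eqref{eq:xbar} I would bound $|\var(\bar Y_n)|\le \tfrac1n\bigl(|\gamma(0)|+2\sum_{h=1}^{n-1}|\gamma(h)|\bigr)$. Because $|\gamma(h)|\to0$, its Ces\`aro averages vanish, so the right-hand side tends to $0$; Chebyshev's inequality then gives $\bar Y_n\overset{P}{\To}0$. This direction is routine.

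For \eqref{eq:xbaras} the argument has three steps. First, convert the summability hypothesis into a variance rate: since $\sum_{h=1}^{n-1}|\gamma(h)|\le n^{q}\sum_{h\ge1}|\gamma(h)|/h^{q}$, the identity gives $\var(S_n)=O(n^{1+q})$, i.e.\ $\var(\bar Y_n)=O(n^{q-1})$ with $q-1<0$. Second, run a subsequence argument: fix $\beta>1/(1-q)$ and put $n_k:=\lfloor k^{\beta}\rfloor$, so that $\sum_k\var(\bar Y_{n_k})=O\bigl(\sum_k k^{\beta(q-1)}\bigr)<\infty$; Chebyshev together with Borel--Cantelli then yields $\bar Y_{n_k}\to0$ almost surely. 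Third, fill in the gaps: for $n_k\le n<n_{k+1}$ write $\bar Y_n=\tfrac{n_k}{n}\bar Y_{n_k}+\tfrac1n(S_n-S_{n_k})$, where the first term is dominated by $|\bar Y_{n_k}|$. For the second I would control $D_k:=\max_{n_k\le n\le n_{k+1}}|S_n-S_{n_k}|$ by Cauchy--Schwarz, obtaining $\E(D_k^2)\le(n_{k+1}-n_k)^2\gamma(0)$; since $n_{k+1}-n_k=O(k^{\beta-1})$ while $n_k$ is of order $k^{\beta}$, this gives $\sum_k\E(D_k^2/n_k^2)=O\bigl(\sum_k k^{-2}\bigr)<\infty$, hence $D_k/n_k\to0$ almost surely. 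Combining the three steps shows $\sup_{n_k\le n<n_{k+1}}|\bar Y_n|\to0$ almost surely, which is the desired $\bar Y_n\overset{a.s.}{\To}0$.

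The routine part is the convergence in probability; the main obstacle is the almost sure statement, and specifically the interplay in the subsequence choice. The exponent $\beta$ must be large enough (relative to $1/(1-q)$) that $\sum_k\var(\bar Y_{n_k})$ converges, yet the blocks $[n_k,n_{k+1}]$ must stay short enough that the crude Cauchy--Schwarz bound on the within-block maximum $D_k$ remains summable after division by $n_k^2$. The polynomial choice $n_k=\lfloor k^{\beta}\rfloor$ reconciles both demands uniformly over $0\le q<1$, so that no sharp maximal inequality for dependent sequences is needed.
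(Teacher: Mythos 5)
Your proof is correct, and for the almost sure part it takes a genuinely different route from the paper. The convergence-in-probability half is essentially identical in both: bound $\var(\bar Y_n)$ by $\tfrac1n\bigl(|\gamma(0)|+2\sum_{h=1}^{n-1}|\gamma(h)|\bigr)$, note this tends to zero when $\gamma(h)\to0$, and apply Chebyshev. For the almost sure half, the paper does not argue from scratch: it verifies that $\sum_n \var(Y_n)[\log n]^2/n^2<\infty$ (trivially, since $\var(Y_n)=\gamma(0)$) and that $\sup_n\cov(Y_n,Y_{n+h})\le|\gamma(h)|$, and then invokes an external strong law (theorem 1 of Hu et al., 2008) with $\rho_h:=|\gamma(h)|$ and $b_n:=n$. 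You instead give a self-contained elementary argument: extract the variance rate $\var(\bar Y_n)=O(n^{q-1})$ from the weighted summability hypothesis, apply Chebyshev and Borel--Cantelli along the polynomial subsequence $n_k=\lfloor k^\beta\rfloor$ with $\beta>1/(1-q)$ (which forces $\beta>1$, so the block lengths are $O(k^{\beta-1})$), and control the within-block fluctuation $D_k$ by the crude Cauchy--Schwarz bound $\E(D_k^2)\le(n_{k+1}-n_k)^2\gamma(0)$, which after division by $n_k^2$ is summable. All steps check out, including the balance between making $\sum_k\var(\bar Y_{n_k})$ converge and keeping $D_k/n_k$ summable in square mean. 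What each approach buys: the paper's proof is shorter but outsources the real work to a citation; yours is longer but fully elementary and in fact uses only the consequence $\var(S_n)=O(n^{1+q})$ with $q<1$, which is strictly weaker than the stated hypothesis, so your argument establishes the lemma under a nominally weaker condition and avoids any reliance on a maximal inequality for dependent sequences.
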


\begin{proof}[\bf {Proof of Lemma \ref{lem:convP}}]
Let $\bar{Y} := \frac{1}{n}\sum_{i=1}^n Y_i$. If $\gamma(h) \to 0$, for any $\eps >0$, there exist $h_0 > 0$ and $n_0 > h_0$ such that
$|\gamma(h)| < \frac{\eps}{6}$, for all $h > h_0$, $\frac{1}{n}\gamma(0) < \frac{\eps}{3}$, and $\frac{2}{n}\sum_{j = 1}^{h_0} |\gamma(j)| < \frac{\eps}{3}$, for all $n > n_0$. Thus,
\[
\big|\var(\bar{Y})| \leq \frac{1}{n}\gamma(0) + \frac{2}{n}\sum_{h = 1}^{h_0} |\gamma(h)| + \frac{2}{n}\sum_{h = h_0+1}^{n-1} |\gamma(h)|
 \leq \frac{\eps}{3}  + \frac{\eps}{3} + \frac{\eps}{3}\frac{(n-h_0-1)}{n} \leq \eps.\\
\]
Since $\eps$ is arbitrary, $\var(\bar{Y}) \to 0$, as $n\to \infty$, and \eqref{eq:xbar} follows immediately from Chebyshev's inequality. Now, suppose that
\[
\sum_{h = 1}^\infty \frac{|\gamma(h)|}{h^q} < \infty, \quad \mbox{for some }\ 0 \leq q < 1,
\]
and observe that the stationarity of $\{Y_t\}_{t\in\Z}$ implies that
\[
\sum_{n = 1}^\infty \frac{\var(Y_n)[\ln(n)]^2}{n^2} = \gamma(0)\sum_{n = 1}^\infty \frac{[\ln(n)]^2}{n^2} < \infty \aand \underset{n\geq 1}{\sup}\{\cov(Y_n, Y_{n+h})\} \leq |\gamma(h)|.
\]
Hence, by letting $\rho_h := |\gamma(h)|$, $h \geq 1$ and  $b_n := n$, the result follows from theorem 1 in \citet{Huea2008}.
\end{proof}

\noindent Observe that any stationary processes with absolutely summable covariance function satisfy  \eqref{eq:xbaras} with $q = 0$. Moreover, any stationary process for which $\gamma(h) \sim \ell h^{-\beta}$, $0 < \beta < 1$, where $\ell \in \R$, satisfy \eqref{eq:xbaras} for any $1- \beta < q <1$.

\begin{proof}[\bf {Proof of Theorem \ref{thm:convDCCA}}]
From Corollary \ref{lem:sta}, $\big\{f_{k,DFA}^2(m,i)\big\}_{i=1}^{n-m}$ and $\big\{f_{DCCA}(m,i)\big\}_{i=1}^{n-m}$, are strictly stationary processes.   Hence, the results for $F^2_{k,DFA}(m)$ and $F_{DCCA}(m)$ follow from  Lemma \ref{lem:convP} upon observing that,  from \eqref{eq:c1} and  \eqref{eq:c2},
  \[
F^2_{k,DFA}(m)= \frac{1}{n-m}\sum_{i = 1}^{n-m}f^2_{k,DFA}(m,i) = \frac{1}{N}\sum_{i=1}^N Y_{k,i}, \quad Y_{k,i} = f^2_{k,DFA}(m,i) \aand  N = n-m,
\]
\[
F_{DCCA}(m) = \frac{1}{n-m}\sum_{i = 1}^{n-m} f_{DCCA}(m,i) = \frac{1}{N}\sum_{i = 1}^{N} Z_{i}, \quad \quad Z_{i} = f^2_{DCCA}(m,i) \aand  N = n-m.
\]
The results for $\rho_{DCCA}(m)$ follow upon observing that, from \eqref{eq:rho_dcca},
\[
\rho_{DCCA}(m) = \frac{F_{DCCA}(m)}{F_{1,DFA}(m)F_{2,DFA}(m)},  \quad 0< m< n,
\]
and  applying the continuous mapping theorem.
\end{proof}

\begin{proof}[\bf {Proof of Proposition \ref{lem:cond}}]
Notice that,  for any  $k_1,k_2\in\{ 1,2\}$ and $h > m+1$,
{\[
K_{m+1}\Gamma_{k_1,k_2}^{0,h} = \begin{bmatrix} K_{m+1}\Gamma_{k_1,k_2}^{0,h-(m+1)} & K_{m+1}H_{k_1,k_2}(h)\\
 \end{bmatrix}, \quad
K_{m+1}(h)\Gamma_{k_1,k_2}^{h,0} = \begin{bmatrix}  0_{h,m+1}\\
 K_{m+1}H_{k_1,k_2}(-h)\\
 \end{bmatrix},
\]}
where
\[
H_{k_1, k_2}(h) := \begin{bmatrix}
\gamma_{k_1, k_2}(h) & \cdots  & \gamma_{k_1, k_2}(m+h) \\
\vdots & \ddots & \vdots \\
\gamma_{k_1, k_2}(h-m) & \cdots & \gamma_{k_1, k_2}(h) \\
\end{bmatrix}, \quad h \in \Z.
\]
Hence, $\gamma_{k_1,k_2}(h) \to 0$, as  $|h| \to \infty$,  implies
\[
\trace\Big(K_{m+1}\Gamma_{k_1,k_2}^{0,h}K_{m+1}(h)\Gamma_{k_1,k_2}^{h,0}\Big) = {\trace\big(K_{m+1}H_{k_1,k_2}(h)K_{m+1}H_{k_1,k_2}(-h)\big) } \to 0.
\]
Now, observe that $K_{m+1}^\otimes(h)\K_{k_1,k_2}(h)$ is a block matrix for which the $(p,q)$th block is given by
\begin{align*}
\big[K_{m+1}^\otimes(h)\K_{k_1,k_2}(h)\big]^{p,q} & = \sum_{\ell = 1}^{m+1} [K_{m+1}]_{p,\ell}K_{m+1}(h)[\K_{k_1,k_2}(h)]^{\ell,q}\\
&  = [K_{m+1}]_{p,\ell}
\begin{bmatrix}
0_{h,h} & 0_{h,m+1}\\
K_{m+1}\Upsilon_{k_1,k_2}^{\ell,q}(h) &  K_{m+1}\mathcal{C}_{k_1,k_2}^{\ell,q}(h) \\
\end{bmatrix}, \quad 1 \leq p,q \leq m+1,
\end{align*}
with
\[
\Big[\Upsilon_{k_1,k_2}^{\ell,q}(h)\Big]_{\tau,s} = \ka_{k_1,k_2}(\ell,h+\tau,q,s), \quad 1 \leq \tau \leq m+1, \quad 1 \leq s \leq h,
\]
\[
\Big[\mathcal{C}_{k_1,k_2}^{\ell,q}(h)\Big]_{\tau,s} = \ka_{k_1,k_2}(\ell,h+\tau,q,h+s), \quad 1 \leq \tau,s \leq m+1.
\]
It follows that
\begin{align*}
  \trace\big(K_{m+1}^\otimes(h)\K_{k_1,k_2}(h)\big)
  & = \sum_{p = 1}^{m+1}\trace\big(\big[K_{m+1}^\otimes(h)\K_{k_1,k_2}(h)\big]^{p,p}\big)
    = \sum_{p = 1}^{m+1}[K_{m+1}]_{p,p}\trace\big(K_{m+1}\mathcal{C}_{k_1,k_2}^{p,p}(h)\big)\\
  &  = \sum_{p = 1}^{m+1}\sum_{q = 1}^{m+1}[K_{m+1}]_{p,p}\big[K_{m+1}\mathcal{C}_{k_1,k_2}^ n{p,p}(h)\big]_{q,q} = \sum_{p = 1}^{m+1} \sum_{q = 1}^{m+1}  \sum_{\tau = 1}^{m+1}[K_{m+1}]_{p,p}\big[K_{m+1}\big]_{q,\tau}\ka_{k_1,k_2}(p,h+\tau,p,h+q).
\end{align*}
Hence $\ka_{k_1,k_2}(p,h+\tau,p,h+q) \to 0$, as $h\to \infty$, implies
\[
\trace\big(K_{m+1}^\otimes(h)\K_{k_1,k_2}(h)\big) \To 0, \as h\to \infty.
\]
Therefore, the result follows.
\end{proof}

\begin{proof}[\bf {Proof of Lemma \ref{lem:alpham}}]
Since for any $\nu\geq0$, $Q_{m+1}1_{m+1} = 0$,  it follows that $K_{m+1}$ is a symmetric matrix satisfying $[K_{m+1}]_{r,s} = 0$, if $r=1$ or $s=1$.
Also,  $M_0 = M_0^\ast =  I_{m+1}$, so that $\trace (K_{m+1}M_0) = \trace (K_{m+1}M_0^\ast)  =  \trace(K_{m+1})$, while  $\trace (K_{m+1}M_m) = \trace (K_{m+1}M_m^\ast)=0$. For $1\leq j<m$, we have
\begin{equation*}
\trace (K_{m+1}M_j)  = \sum_{r = 1}^{m+1}\sum_{s = 1}^{m+1}[K_{m+1}]_{r,s}[M_j]_{s,r} =  2\!\!\!\sum_{r = 2}^{m+1-j}\!\![K_{m+1}]_{r,r+j},
\end{equation*}
and, analogously,
\begin{equation*}
  \alpha_j^{(m)} :=\trace (K_{m+1}M_j^\ast)   = \sum_{r = 2}^{m+1-j}\!\![K_{m+1}]_{r,r+j} = \frac{1}{2}\trace (K_{m+1}M_j).
\end{equation*}
Consider now the case $\nu=0$. In this case, for all $r,s > 1$,   the $(r,s)$th element in the matrix $K_{m+1}$ is given by
\begin{align*}
  [K_{m+1}]_{r,s}  &  = \sum_{i = r}^{m+1}\sum_{j = s}^{m+1}[Q_{m+1}]_{i,j}
                     = m+2-\max\{r,s\} -\sum_{i = r}^{m+1}\sum_{j = s}^{m+1}[P_{m+1}]_{i,j}\nonumber\\
  & = m+2-\max\{r,s\} - \frac{(m + 2- r) (m + 2 - s) \big[m^2 + 2 m + 3 (r - 1) (s - 1)\big]}{m (m + 1) (m + 2)}.
 \end{align*}
It follows that
\[
 \alpha_0^{(m)} = \trace \big(K_{m+1}\big)  = \sum_{r = 2}^{m+1}\Bigg\lbrace  m+2-r - \frac{(m + 2 - r)^2 \big[m^2 + 2 m + 3 (r - 1)^2\big]}{m (m + 1) (m + 2)} \bigg\rbrace  = \frac{m^2 + 2m -3}{15}.
\]
and
  \begin{align*}
    \alpha_j^{(m)} & =  \sum_{r = 2}^{m+1-j}\Bigg\lbrace m+2-(r+j) - \frac{(m + 2- r) (m + 2- r-j) \big[m^2 + 2 m + 3 (r - 1) (r+j - 1)\big]}{m (m + 1) (m + 2)}\Bigg\rbrace\\
    & = \frac{(m -j)(m + 1 -j)(m + 2 -j)}{m(m + 1)(m + 2)}\bigg[\frac{m^2 + 2m -3}{15}- \frac{j^2 + 3 j(m + 1)}{10}  \bigg] \sim \alpha_0^{(m)}, \as m\to \infty.
\end{align*}
This completes the proof.
\end{proof}

\begin{proof}[\bf {Proof of Theorem \ref{thm:asymp}}]
  Suppose that $\sum_{h\in \Z}|\gamma_{k}(h)| < \infty$.   Let $a_k,b_k >0$ be the two real constants satisfying $|\gamma_k(h)| \leq a_k e^{-b_k|h|}$, for all $h \in \Z$.   Then, for any $0 < h_0 < m$,  from \eqref{eq:gammask},
\[
  \frac{1}{m}\trace(K_{m+1}\Gamma_k) = \frac{\alpha_0^{(m)}}{m}\bigg[\gamma_k(0) + 2\sum_{h = 1}^{h_0-1}\beta_h^{(m)}\gamma_k(h) + 2\sum_{h = h_0}^{m-1}\beta_h^{(m)}\gamma_k(h)\bigg].
\]
Now, since $|\beta_h^{(m)}| \leq 1$ and $|\gamma_k(h)| \leq a_k e^{-b_k h}$,
\[
\bigg| \sum_{h = h_0}^{m-1}\beta_h^{(m)}\gamma_k(h)\bigg| \leq a_k\sum_{h = h_0}^{\infty} e^{-b_kh}   = a_k \frac{e^{-b_kh_0}}{1 - e^{-b_k}} < \frac{a_k}{b_k(1-e^{-b_k})}\frac{1}{h_0} = O(h_0^{-1}),
\]
uniformly in $m$.  From Lemma \ref{lem:alpham}, $\beta_h^{(m)} \to 1$, for all $0 \leq h < h_0$, so that
\[
\sum_{h = 1}^{h_0-1}\beta_h^{(m)}\gamma_k(h) =  \big[1-o(1)\big]\sum_{h = 1}^{h_0-1}\gamma_k(h) \sim \sum_{h = 1}^{h_0-1}\gamma_k(h) , \as m \to \infty.
\]
Now observe that $\E\big(f^2_{k,DFA}(m,1)\big) = m^{-1}\trace(K_{m+1}\Gamma_k)$ and, for arbitrary $h_0$,
\[
\frac{m^{-1}\trace(K_{m+1}\Gamma_k)}{m/15\sum_{h\in \Z}\gamma_k(h)} =  \frac{m^2 + 2m-3}{m^2}\bigg[\frac{\gamma_k(0) + 2[1-o(1)]\sum_{h = 1}^{h_0-1}\gamma_k(h)}{\sum_{h\in \Z}\gamma_k(h)} + O(h_0^{-1}) \bigg]
\]
so that the result follows by letting $h_0 \to \infty$ at a slower rate than $m$. A similar argument applies to
$\E(f_{DCCA}(m,1)) = \frac1m\trace(K_{m+1}\Gamma_{1,2})$.
\end{proof}

\begin{proof}[\bf {Proof of Corollary \ref{coro:linear}}]
From  \eqref{eq:gammasX},
\[
 \sum_{h\in \Z}\gamma_k(h) =  \tau_k^2\sum_{h\in \Z}\sum_{j\in\Z}\psi_{k,j}\psi_{k, j+h} =   \tau_k^2\bigg[\sum_{h\in \Z}\psi_{k,j}\bigg]^2, \quad k \in\{1,2\}
  \]
  and
  \[
  \sum_{h\in \Z}\gamma_{1,2}(h)  =  \tau_{1,2}\sum_{h\in \Z}\sum_{j\in\Z} \psi_{1,j}\psi_{2,j+h} =    \tau_{1,2} \sign\bigg(\sum_{h\in \Z}\sum_{j\in\Z} \psi_{1,j}\psi_{2,j+h}\bigg) \sqrt{\Big[\sum_{j\in \Z} \psi_{1,j}\Big]^2\Big[\sum_{j\in\Z}\psi_{2,j} \Big]^2}.
  \]
The result now follows immediately from Theorem \ref{thm:asymp}.
\end{proof}

\newpage
\includepdf[pages=-]{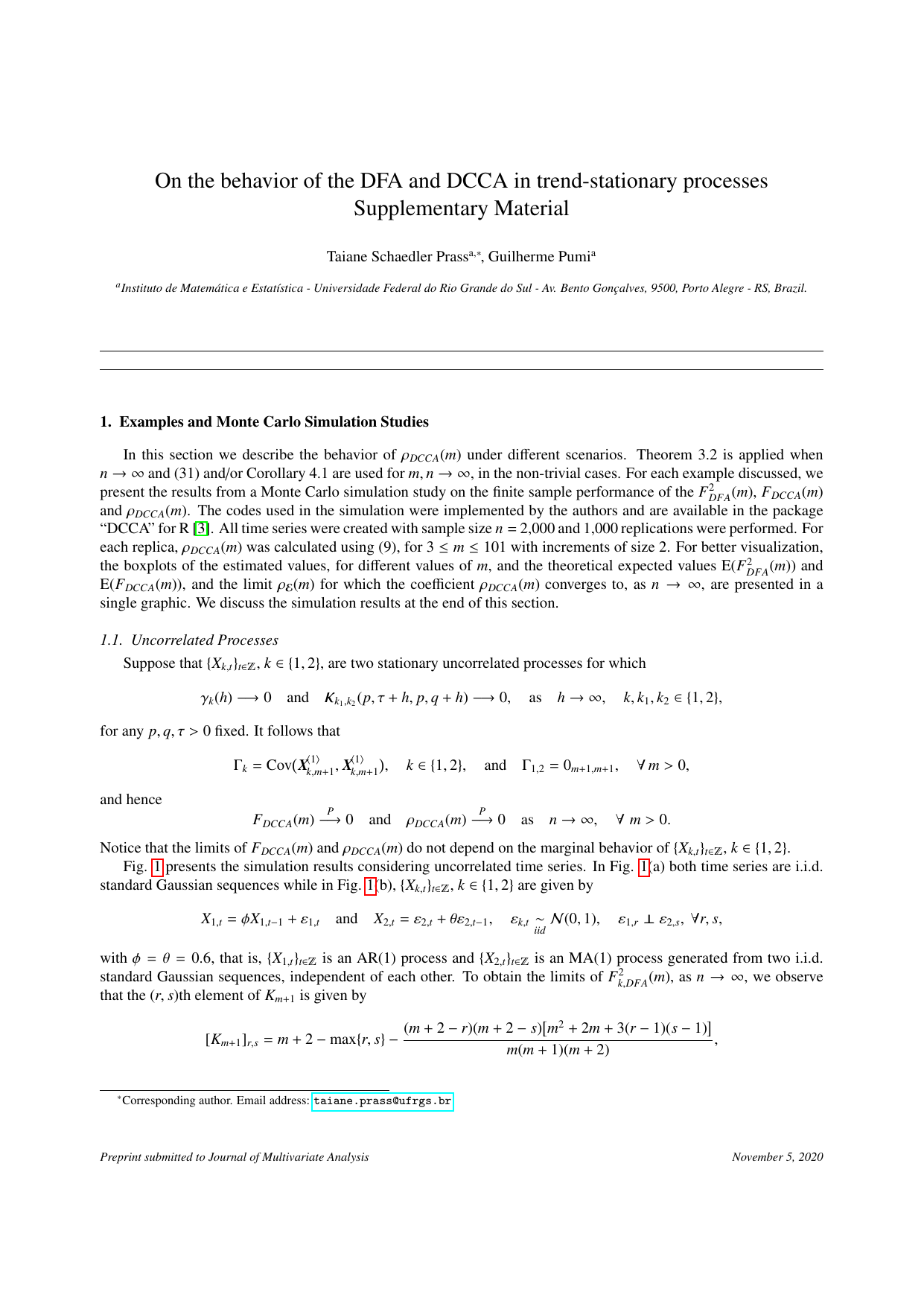}

\end{document}